\newtheorem{theorem}{Theorem}[section]
\newtheorem{lem}[theorem]{Lemma}
\newtheorem{cor}[theorem]{Corollary}
\newtheorem{prop}[theorem]{Proposition}
\theoremstyle{definition}
\newtheorem{definition}[theorem]{Definition}
\newtheorem{remark}[theorem]{Remark}
\newtheorem{example}[theorem]{Example}
\DeclareMathOperator{\End}{End}
\DeclareMathOperator{\Id}{Id}
\DeclareMathOperator{\Real}{Re}
\DeclareMathOperator{\Imaginary}{Im}
\DeclareMathOperator{\im}{im}
\numberwithin{equation}{section}
\def\sideremark#1{\ifvmode\leavevmode\fi\vadjust{\vbox to0pt{\vss
 \hbox to 0pt{\hskip\hsize\hskip1em
 \vbox{\hsize3cm\tiny\raggedright\pretolerance10000
 \noindent #1\hfill}\hss}\vbox to8pt{\vfil}\vss}}}
\begin{document}

\title[Deformations of MTW scalar curvature]{Deformations of the scalar curvature of a partially integrable pseudohermitian manifold}

\author{Jeffrey S. Case}
\address{Department of Mathematics, Penn State University, University Park, PA 16802, USA}
\email{jscase@psu.edu}


\author{Pak Tung Ho}
\address{Department of Mathematics, Tamkang University, Tamsui, New Taipei City 251301, Taiwan}
\email{paktungho@yahoo.com.hk}

\keywords{partially integrable CR manifold; pseudohermitian manifold; Tanaka--Webster scalar curvature; deformation}
\subjclass[2020]{Primary 58J60; Secondary 32V05 53C21 53D35}

\begin{abstract}
We consider deformations of the scalar curvature of a partially integrable pseudohermitian manifold, in analogy with the work of Fischer and Marsden on Riemannian manifolds.
In particular, we introduce and discuss $R$-singular spaces, give sufficient conditions for the stability of the scalar curvature, and give a partial infinitesimal rigidity result for the scalar curvature of a compact, torsion-free, scalar-flat, integrable pseudohermitian manifold.
\end{abstract}

\maketitle

\section{Introduction}

Fischer and Marsden~\cite{Fischer&Marsden} studied the stability and rigidity of the scalar curvature $R$ of a Riemannian manifold $(M^n,g)$.
Here $R$ is \emph{stable} at $g$ if whenever $h \in \ker DR$, the metric linearization of $R$, there is a path $g(t)$ of metrics with $g(0)=g$ and $g^\prime(0)=h$ such that $R(g(t))$ is constant;
and $R$ is (locally) \emph{rigid} at $g$ if there is a neighborhood $U$ of $g$ in the space of Riemannian metrics such that if $\hat g \in U$ and $R(\hat g) \geq R(g)$, then $\hat g$ is isometric to $g$.
A key tool in these results is the notion of an $R$-singular space;
i.e.\ a Riemannian manifold $(M^n,g)$ for which the kernel of the formal adjoint of $DR$ is nontrivial.
The implicit function theorem implies~\cite{Fischer&Marsden}*{p.\ 519} that if a given compact Riemannian manifold $(M^n,g)$ is not $R$-singular, then $R$ is stable at $g$.
Additionally, one can give necessary conditions for a compact Riemannian manifold to be $R$-singular, and hence sufficient conditions for $R$ to be stable at a given compact Riemannian manifold.
One can also show that $\ker (DR)^\ast = \mathbb{R}$ on compact Ricci-flat manifolds, an important first step in proving the local rigidity of $R$ at such manifolds.

In the Riemannian setting, the key point is that the variational structure of the scalar curvature allows one to derive necessary conditions for $\ker(DR)^\ast \not= \{ 0 \}$.
Lin and Yuan~\cite{Lin&Yaun} illustrated this point by developing stability and rigidity results for the fourth-order $Q$-curvature.
Case, Lin, and Yuan~\cite{CaseLinYuan} completely clarified this point by generalizing these results to \emph{any} variational scalar Riemannian invariant.

In the hermitian setting, Angella and Pediconi~\cite{Angella&Pediconi} proved analogues of the results of Fischer and Marsden for the Chern-scalar curvature.
Again, one can give necessary conditions for a compact hermitian manifold to be such that the adjoint of the linearization of the Chern-scalar curvature has nontrivial kernel.

The purpose of this paper is to initiate the systematic study of stability and rigidity for certain scalar invariants on partially integrable pseudohermitian manifolds by studying the special case of the scalar curvature.
Recall that a (nondegenerate) \emph{partially integrable CR manifold} $(M^{2n+1},J)$ is a contact manifold $(M^{2n+1},\xi)$ together with an almost complex structure $J$ on $\xi$ such that the $(+i)$-eigenspace $T^{1,0}$ of $J$ on $\xi \otimes \mathbb{C}$ satisfies the partial integrability condition
\begin{equation*}
 [ C^\infty(M;T^{1,0}) , C^\infty(M;T^{1,0}) ] \subseteq C^\infty( M ; T^{1,0} \oplus T^{0,1} ) ,
\end{equation*}
where $T^{0,1} := \overline{T^{1,0}}$;
see Section~\ref{bg} for a detailed discussion.
These generalize (nondegenerate) \emph{CR manifolds}, which are the integrable case
\begin{equation*}
 [ C^\infty(M;T^{1,0}) , C^\infty(M;T^{1,0}) ] \subseteq C^\infty(M;T^{1,0}) .
\end{equation*}
Note that every partially integrable CR three-manifold is integrable.

Matsumoto showed~\cite{Matsumoto2014} that if $\theta$ is a contact form on $(M^{2n+1},J)$---that is, if $\theta$ is a real one-form with $\ker\theta = \xi$---then there is a unique connection $\nabla$ which preserves the specified structure and for which the torsion takes a particularly nice form.
Moreover, if $J$ is integrable, then his connection recovers the Tanaka--Webster connection~\cites{Tanaka,Webster}.
Hence we call it the Matsumoto--Tanaka--Webster, or \emph{MTW}, connection.
One then defines the curvature in the usual way.
In this paper, we study the stability and rigidity of the \emph{MTW scalar curvature}.

The reasons to study the MTW scalar curvature, rather than the Tanaka--Webster scalar curvature of an integrable pseudohermitian manifold, are two-fold:

First, infinitesimal deformations of a partially integrable CR manifold are determined algebraically by linearizing the equation $J^2=-1$;
i.e.\ the space of infinitesimal deformations of a partially integrable CR manifold is equivalent to the space of sections of a particular vector bundle.
This is analogous to the situation in Riemannian geometry.
By contrast, infinitesimal deformations of integrable CR manifolds (exception in dimension three) are determined differentially by also linearizing the condition that the Nijenhuis tensor---which is defined on any partially integrable CR manifold~\cite{Matsumoto2014}---vanishes.
Thus the space of infinitesimal deformations of an integrable CR manifold is equivalent to the space of \emph{holomorphic} sections of a certain CR vector bundle~\cite{Akahori}, analogous to the situation in complex geometry.

Second, Matsumoto's study~\cite{Matsumoto2014} of partially integrable CR manifolds reveals many close analogies between such structures and conformal manifolds.
For instance, the $Q$-curvature of a partially integrable CR manifold need not be a divergence, unlike in the integrable case~\cite{Marugame2018};
and the obstruction tensor of a partially integrable CR manifold is generally nontrivial.
Partially integrable CR manifolds, like conformal and CR manifolds, are examples of parabolic geometries~\cite{CapSlovak}.

To describe our main results, regard the MTW scalar curvature $R$ as a $C^\infty(M)$-valued function on the product of the space $\mathcal{C}$ of partially integrable CR structures and the space $\mathcal{K}$ of contact forms on a contact manifold $(M^{2n+1},\xi)$.
Denote its linearization by $DR \colon T_{(J,\theta)}(\mathcal{C} \times \mathcal{K}) \to C^\infty(M)$ and let
\begin{equation*}
 \Gamma \colon C^\infty(M) \to T_{(J,\theta)}(\mathcal{C} \times \mathcal{K})
\end{equation*}
denote its adjoint with respect to the $L^2$-inner product induced by $(J,\theta)$.
We say that $(M^{2n+1},J,\theta)$ is \emph{$R$-singular} if $\ker\Gamma \not= \{ 0 \}$.
As in the case of the scalar curvature of a Riemannian manifold, non-$R$-singular spaces are stable.
More generally:

\begin{theorem}
 \label{thm1}
 Let $(M^{2n+1},J,\theta)$ be a compact, strictly pseudoconvex, partially integrable, non-$R$-singular, pseudohermitian manifold.
 Then $R \colon \mathcal{C} \times \mathcal{K} \to C^\infty(M)$ is a submersion at $(J,\theta)$.
 In particular, there is a neighborhood $U \subset C^\infty(M)$ of $R^{J,\theta}$ such that for any $\psi \in U$, there is a $(\widetilde{J},\widetilde{\theta}) \in \mathcal{C} \times \mathcal{K}$ such that $R^{\widetilde{J},\widetilde{\theta}} = \psi$.
\end{theorem}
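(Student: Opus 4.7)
The plan is to derive the submersion statement from the Banach-space implicit function theorem applied to $R$ in an appropriate completion of $\mathcal{C} \times \mathcal{K}$, adapting the approach of Fischer--Marsden~\cite{Fischer&Marsden} to the subelliptic setting of pseudohermitian geometry. Since $DR$ involves the sub-Laplacian, it is not elliptic, and I would work in Folland--Stein spaces $S^{k,p}$: for $k$ sufficiently large and $p \in (1,\infty)$, the spaces $\mathcal{C}$ and $\mathcal{K}$ admit Banach-manifold completions $\mathcal{C}^{k,p}$ and $\mathcal{K}^{k,p}$ on which $R$ extends to a smooth map into $S^{k-2,p}(M)$. Strict pseudoconvexity guarantees that the subelliptic estimates underlying this functional scale are available.

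The crux of the argument is to show that the extended linearization $DR \colon T_{(J,\theta)}(\mathcal{C}^{k,p} \times \mathcal{K}^{k,p}) \to S^{k-2,p}(M)$ is surjective. Under the natural dualities, its formal $L^2$-adjoint is exactly $\Gamma$, and the non-$R$-singular hypothesis reads $\ker\Gamma = \{0\}$. Following the Fischer--Marsden strategy, I would examine the fourth-order composition $DR \circ \Gamma \colon S^{k,p}(M) \to S^{k-4,p}(M)$: it is self-adjoint, non-negative, and satisfies $\ker(DR \circ \Gamma) = \ker\Gamma = \{0\}$. Strict pseudoconvexity ensures $DR \circ \Gamma$ is subelliptic, hence Fredholm on the compact manifold $M$, so the triviality of its kernel forces bijectivity. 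The operator $\Gamma \circ (DR \circ \Gamma)^{-1}$ is then a bounded right inverse of $DR$, establishing surjectivity. The implicit function theorem provides a smooth local right inverse of $R$ on an $S^{k-2,p}$-neighborhood of $R^{J,\theta}$, and a standard hypoelliptic bootstrap identifies the preimages of $C^\infty$-data with genuinely smooth pairs $(\widetilde{J},\widetilde{\theta}) \in \mathcal{C} \times \mathcal{K}$.

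The main obstacle I anticipate is verifying that $DR \circ \Gamma$ is subelliptic in the combined $(J,\theta)$-framework. For the $\theta$-component alone, the scalar linearization reduces to a conformal-type operator built from $\Delta_b$, which is classically subelliptic under strict pseudoconvexity; but the $J$-component lives in the bundle of endomorphisms governing infinitesimal CR deformations, which requires pinning down the correct weighted Folland--Stein topology on that bundle, and the cross terms between the two variations must not disrupt the uniform subelliptic estimate for the combined operator. Once the proper analytic framework is installed, the remaining steps closely parallel those of Fischer--Marsden, with the subelliptic calculus replacing the elliptic one.
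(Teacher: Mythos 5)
Your outline follows the paper's strategy closely---analyze the composition $DR\circ\Gamma$, deduce surjectivity of $DR$ from $\ker\Gamma=\{0\}$, and invoke the implicit function theorem---but it leaves a genuine gap at precisely the step that carries the analytic content. You assert that ``strict pseudoconvexity ensures $DR\circ\Gamma$ is subelliptic, hence Fredholm.'' This is not true as stated, and strict pseudoconvexity alone does not deliver it. The operator $DR\circ\Gamma$ is fourth order in the Heisenberg filtration, and its leading part contains a $\nabla_0^2$ term, which has Heisenberg order $4$ and is therefore \emph{not} a lower-order perturbation of $\Delta_b^2$. Whether such an operator admits a parametrix depends on the specific numerical coefficients, exactly as $\Delta_b+i\alpha\nabla_0$ is hypoelliptic only for $\alpha$ off a discrete exceptional set. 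The paper's Lemma~\ref{splitting} does the computation you are missing: using the commutator identities~\eqref{eqn:matsumoto-commutator} and~\eqref{eqn:tor-commutator} one finds that, modulo lower Heisenberg order, $DR\circ\Gamma \equiv \frac{2n^2+4n+3}{2}\Delta_b^2 - \frac{n(n+2)}{2}\nabla_0^2$, which factors as $\frac{2n^2+4n+3}{2}(\Delta_b+a\nabla_0)(\Delta_b-a\nabla_0)$ with $a=\sqrt{n(n+2)/(2n^2+4n+3)}$ \emph{real}; it is the reality of $a$ (placing each factor off the exceptional set) that yields a parametrix in the Beals--Greiner calculus and hence hypoellipticity and the splitting $C^\infty(M)=\ker\Gamma\oplus\im DR$. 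Without this computation your claim of Fredholmness, and hence of surjectivity of $DR$, is unsupported.

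Two smaller points. First, your anticipated obstacle is misdiagnosed: $DR\circ\Gamma$ maps scalar functions to scalar functions, so there is no issue of ``pinning down the correct weighted Folland--Stein topology'' on the deformation bundle for that composition; the bundle only enters when parametrizing $\mathcal{C}$, which the paper handles with the explicit chart $\Phi_{(J,\theta)}(E,u)=(J\exp(-JE),e^u\theta)$ whose differential at the origin is the identity. Second, once the splitting lemma is in place your remaining steps (self-adjointness, $\ker(DR\circ\Gamma)=\ker\Gamma$, index zero, bounded right inverse $\Gamma\circ(DR\circ\Gamma)^{-1}$, implicit function theorem, bootstrap to smoothness) are sound and agree with the paper's argument.
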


See Section~\ref{bg} for an explanation of our notation and Section~\ref{sec:stability} for further discussion.
In Section~\ref{section3} we give examples of $R$-singular and non-$R$-singular spaces.
For example, Proposition~\ref{prop0} states that compact, partially integrable, torsion-free, MTW scalar-flat manifolds are $R$-singular with $\ker\Gamma = \mathbb{R}$, and Proposition~\ref{laplace-kernel} gives a necessary condition on the spectrum of the sublaplacian for a compact, partially integrable, pseudohermitian manifold to be $R$-singular.

Based on the local rigidity results of Fischer--Marsden~\cite{Fischer&Marsden}---or the global rigidity results of Schoen--Yau~\cite{SchoenYau1979} and Gromov--Lawson~\cite{GromovLawson1980,GromovLawson1983}---one wonders whether there is a local rigidity result for the MTW scalar curvature.
We cannot presently prove this, but we do obtain a sufficient condition for infinitesimal rigidity (cf.\ results of Brill--Deser~\cite{BrillDeser1973} and Kazdan--Warner~\cite{KazdanWarner1975} on Riemannian manifolds):

\begin{theorem}
 \label{partial-rigidity}
 Let $(M^{2n+1},J_t,\theta_t)$, $n \geq 2$, be a one-parameter family of compact, integrable, pseudohermitian manifolds with nonnegative MTW scalar curvature.
 Denote by $(E,u) \in T_{(J_0,\theta_0)}(\mathcal{C} \times \mathcal{K})$ the tangent vector to $(J_t,\theta_t)$ at $t=0$.
 Assume that $(J_0,\theta_0)$ is torsion-free and MTW scalar-flat, and that there is a constant
 \begin{equation*}
  C > 1 + \frac{n-1}{n(n+1)(n+2)}
 \end{equation*}
 such that $\int\lvert\nabla_{\bar\gamma}E_{\alpha\beta}\rvert^2 \, \theta \wedge d\theta^n \geq C \int \lvert\nabla^\gamma E_{\gamma\alpha} \rvert^2 \, \theta \wedge d\theta^n$.
 Then $E$ is parallel and $u$ is constant.
\end{theorem}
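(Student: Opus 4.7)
The key observation is that $(J_0,\theta_0)$ minimizes $R$ pointwise along the family: since $R^{J_t,\theta_t}\geq 0$ everywhere and $R^{J_0,\theta_0}\equiv 0$, for each $p\in M$ the map $t\mapsto R^{J_t,\theta_t}(p)$ attains its minimum at $t=0$. Differentiating once and twice therefore yields
\[
 DR(E,u) \equiv 0 \text{ on } M, \qquad D^2R\bigl[(E,u),(E,u)\bigr] \geq 0 \text{ on } M,
\]
and, integrating the second inequality against the background volume form,
\[
 Q(E,u) := \int_M D^2R\bigl[(E,u),(E,u)\bigr]\, \theta_0\wedge d\theta_0^n \;\geq\; 0.
\]
(The volume variation and the first-variation cross term contribute nothing to the second derivative of $\int R\,\theta\wedge d\theta^n$ at $t=0$ since $R$ and $DR$ both vanish there.)

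Next I would read off the first variation formula on a torsion-free, MTW scalar-flat, integrable pseudohermitian background using the structural identities for the MTW connection. On such a background $DR(0,u)$ is proportional to $\Delta_b u$, while $DR(E,0)$ is a second-order linear operator in $E$ whose leading part is (the real part of) $\nabla^\alpha\nabla^\beta E_{\alpha\beta}$. The pointwise identity $DR(E,u)\equiv 0$ therefore expresses $\Delta_b u$ as a specific divergence of $E$, which is the relation that will eliminate $u$ in the second variation calculation.

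The central step, and main obstacle, is the computation of $Q(E,u)$ itself. Here I would expand $D^2R$ on the torsion-free, scalar-flat, integrable background; integrate by parts repeatedly using the commutator identities for $\nabla_\alpha,\nabla_{\bar\beta}$ in the torsion-free integrable case, together with the symmetry and reality of $E_{\alpha\beta}$; and substitute the first-variation identity to eliminate the explicit occurrences of $u$ and $\Delta_b u$. The target is a bound of the form
\[
 Q(E,u) \;\leq\; -\int_M \abs{\nabla_{\bar\gamma} E_{\alpha\beta}}^2\, \theta_0\wedge d\theta_0^n \;+\; \Bigl(1+\tfrac{n-1}{n(n+1)(n+2)}\Bigr)\int_M \abs{\nabla^\gamma E_{\gamma\alpha}}^2\, \theta_0\wedge d\theta_0^n.
\]
The delicate point, and what I expect to be the hardest part of the argument, is that the multiplier of $\int \abs{\nabla^\gamma E_{\gamma\alpha}}^2$ has to come out exactly equal to $1+\tfrac{n-1}{n(n+1)(n+2)}$; this requires careful bookkeeping of the dimensional coefficients arising from commuting derivatives (where the $n$-dependent constants enter from contracting the Levi form) and from substituting the first-variation identity for $u$.

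Once this inequality is in hand, combining it with $Q(E,u)\geq 0$ and the hypothesis $\int\abs{\nabla_{\bar\gamma}E_{\alpha\beta}}^2 \geq C\int\abs{\nabla^\gamma E_{\gamma\alpha}}^2$ with $C>1+\tfrac{n-1}{n(n+1)(n+2)}$ yields
\[
 0 \;\leq\; \Bigl(\tfrac{1+\frac{n-1}{n(n+1)(n+2)}}{C}-1\Bigr)\int_M \abs{\nabla_{\bar\gamma} E_{\alpha\beta}}^2\, \theta_0\wedge d\theta_0^n,
\]
which forces $\nabla_{\bar\gamma}E_{\alpha\beta}\equiv 0$ (hence, by reality, its conjugate) and $\nabla^\gamma E_{\gamma\alpha}\equiv 0$. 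In the integrable torsion-free setting these partial parallelism statements combined with the second Bianchi-type identity for the MTW curvature promote $E$ to a fully parallel tensor. Substituting $\nabla E=0$ back into $DR(E,u)\equiv 0$ collapses the first variation to $\Delta_b u=0$, and the maximum principle on the compact manifold $M$ yields $u\equiv\mathrm{const}$, completing the proof.
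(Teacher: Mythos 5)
Your high-level architecture matches the paper's: integrate the pointwise inequality $0 \leq \partial_t^2\rvert_{t=0}R^{J_t,\theta_t}$, kill the cross term $DR(\ddot J,\ddot u)$ under the integral, and bound the resulting quadratic form by a combination of $\int\abs{\nabla_{\bar\gamma}E_{\alpha\beta}}^2$ and $\int\abs{\nabla^\gamma E_{\gamma\alpha}}^2$ so that the hypothesis on $C$ forces vanishing. But the proposal stops short of a proof precisely where the content lies. First, you never compute the integrated second variation; the paper's Proposition~\ref{second-variation} shows it equals $\int\bigl(2\langle du,\epsilon\rangle - 2n\Real(iE^{\alpha\beta}\nabla_0E_{\alpha\beta}) - (n+1)(n+2)\abs{du}^2\bigr)$ with $\epsilon_\alpha = i\nabla^\beta E_{\alpha\beta}$, and this requires linearizing the divergence operator itself (Equation~\eqref{eqn:integrate-linearized-divergence}). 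Second, and most importantly, converting the term $-2n\Real(iE^{\alpha\beta}\nabla_0E_{\alpha\beta})$ into gradient terms rests on the identity $\frac{n}{2}\int\abs{\epsilon}^2 = \int\bigl((n-1)\abs{\nabla_\gamma E_{\alpha\beta}}^2 + \abs{\nabla_{\bar\gamma}E_{\alpha\beta}}^2\bigr)$, whose derivation uses Lee's commutators \emph{and} the fact that $\nabla_{[\alpha}E_{\gamma]\beta}=0$ because the entire family $J_t$ (not just $J_0$) is integrable (Afeltra et al.). Your plan never invokes this hypothesis, and without that identity the target inequality with the constant $1+\frac{n-1}{n(n+1)(n+2)}$ --- which you explicitly defer as ``the hardest part'' --- is reverse-engineered from the theorem statement rather than established. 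The same identity is also what upgrades $\epsilon=0$ and $\nabla_{\bar\gamma}E_{\alpha\beta}=0$ to full parallelism of $E$ (for $n\geq 2$); your appeal to a ``second Bianchi-type identity'' is not a substitute.

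Two smaller points. Your pointwise claim $D^2R[(E,u),(E,u)]\geq 0$ is false as stated --- only $D^2R + DR(\ddot J,\ddot u)\geq 0$ holds pointwise --- and your justification for discarding the cross term after integration (``since $R$ and $DR$ both vanish there'') conflates $DR(E,u)=0$ with the vanishing of $\int DR(V)$ for \emph{arbitrary} $V$; the correct reason is that $1\in\ker\Gamma$ on a torsion-free, scalar-flat background (Proposition~\ref{prop0}). On the other hand, your idea of using the pointwise first-variation identity $DR(E,u)=0$ to eliminate $u$ is a genuine departure from the paper, which instead retains $u$ and absorbs $2\langle du,\epsilon\rangle$ via Cauchy--Schwarz against $(n+1)(n+2)\abs{du}^2$ (this is exactly where the constant $\frac{n-1}{n(n+1)(n+2)}$ originates). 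If carried out, your substitution $(n+1)\Delta_b u = -d^\ast\epsilon$ would make the $u$-terms nonpositive outright and could even weaken the required hypothesis on $C$; but as written it remains a plan, not an argument.
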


Note, for example, that Theorem~\ref{partial-rigidity} applies when $\nabla^\gamma E_{\gamma\alpha}=0$.

This paper is organized as follows:

In Section~\ref{bg} we collect the necessary facts about partially integrable pseudohermitian manifolds from Matsumoto's work~\cite{Matsumoto2014} and discuss the space $\mathcal{C} \times \mathcal{K}$ of pairs of partially integrable CR structures and contact forms on a contact manifold.

In Section~\ref{section2} define $R$-singular spaces, compute the linearizations of the MTW scalar curvature and torsion, and characterize $R$-singular spaces in terms of solutions of an explicit PDE.

In Section~\ref{section3} we give some necessary and/or sufficient conditions for a compact, partially integrable, pseudohermitian manifold to be $R$-singular, and discuss some explicit examples.

In Section~\ref{sec:stability} we prove our stability results for the MTW scalar curvature.

In Section~\ref{sec:rigidity} we prove Theorem~\ref{partial-rigidity}.

\section{Notation and terminology}\label{bg}

Let $(M^{2n+1},\xi)$ be a \emph{contact manifold};
i.e.\ $M$ is a smooth $(2n+1)$-dimensional manifold and $\xi \subset TM$ is a distribution of rank $2n$ such that $\theta \wedge d\theta^n \not= 0$ for every local one-form $\theta$ which annihilates $\xi$.
We always assume that $(M^{2n+1},\xi)$ is coorientable;
i.e.\ there is a (globally-defined) one-form $\theta$, called a \emph{contact form}, such that $\ker\theta = \xi$.
Note that $\theta$ is determined up to multiplication by a nowhere-vanishing function.

A \emph{(strictly pseudoconvex) partially integrable CR structure} on $(M^{2n+1},\xi)$ is a vector bundle homomorphism $J \in \End(\xi)$ such that $J^2=\Id$ is the identity, the $(+i)$-eigenspace $T^{1,0} \subset TM \otimes \mathbb{C}$ is partially integrable in the sense that
\begin{equation*}
 [ C^\infty( M ; T^{1,0} ) , C^\infty( M ; T^{1,0} ) ] \subseteq C^\infty( M ; T^{1,0} \oplus T^{0,1} )
\end{equation*}
for $T^{0,1} := \overline{T^{1,0}}$, and the hermitian form $L_\theta(Z,W) := -i \, d\theta(Z,\overline{W}) = i\theta\bigl([Z,\overline{W}]\bigr)$ on $T^{1,0}$ is positive definite for some contact form $\theta$.
We denote by $\mathcal{C}$ and $\mathcal{K}$ the spaces of partially integrable CR structures and contact forms, respectively, on $(M^{2n+1},\xi)$, and always assume that $\mathcal{C}\not=\emptyset$.
We say that $J$ is \emph{integrable} if in fact
\begin{equation*}
 [ C^\infty( M ; T^{1,0} ) , C^\infty( M ; T^{1,0} ) ] \subseteq C^\infty( M ; T^{1,0} ) .
\end{equation*}

A contact form $\theta$ is \emph{positive} if $L_\theta$ is positive definite.
Note that if $\theta$ is positive, then $f\theta$ is positive if and only if $f \in C^\infty(M)$ is positive.

A \emph{(strictly pseudoconvex) pseudohermitian manifold} $(M^{2n+1},J,\theta)$ is a coorientable contact manifold $(M^{2n+1},\xi)$ together with a partially integrable CR structure $J \in \mathcal{C}$ and a positive contact form $\theta$.
The \emph{Reeb vector field} is the unique vector field $T$ such that $\theta(T)=1$ and $d\theta(T,\cdot)=0$.
An \emph{admissible coframe} is a set $\{ \theta^\alpha \}_{\alpha=1}^n$ of locally-defined complex-valued one-forms which annihilate $\mathbb{C}T \oplus T^{0,1}$ and are such that $\{ \theta, \theta^\alpha , \theta^{\bar\alpha} \}$ defines a local frame for $T^\ast M \otimes \mathbb{C}$, where $\theta^{\bar\alpha} := \overline{\theta^\alpha}$.
This data identifies $L_\theta$ with the hermitian matrix $(h_{\alpha\bar\beta})$ defined by
\begin{equation*}
 d\theta = ih_{\alpha\bar\beta} \, \theta^\alpha \wedge \theta^{\bar\beta} .
\end{equation*}
Note that the dual frame $\{ T, Z_\alpha, Z_{\bar\alpha} \}$ to $\{ \theta, \theta^\alpha, \theta^{\bar\alpha} \}$ is such that $\{ Z_\alpha \}$ is a local frame for $T^{1,0}$ and $Z_{\bar\alpha} = \overline{Z_\alpha}$.
We use $h_{\alpha\bar\beta}$ and its inverse $h^{\alpha\bar\beta}$ to lower and raise indices, respectively;
e.g.\ given a section $\tau^\alpha$ of $T^{1,0}$, we denote by $\tau_{\bar\beta} := \tau^\alpha h_{\alpha\bar\beta}$ the corresponding element of the dual space $T^{(0,1)\ast}$.

We parameterize $\mathcal{C}$ as follows:
Pick a background $J_0 \in \mathcal{C}$ and a contact form $\theta$.
Given $J \in \mathcal{C}$, we can locally write
\begin{equation}
 \label{eqn:defn-J}
 J(Z_\alpha) = J_\alpha{}^\beta Z_\beta + J_\alpha{}^{\bar\beta}Z_{\bar\beta} .
\end{equation}
Note that $J_\alpha{}^\beta = i\delta_\alpha^\beta$ and $J_\alpha{}^{\bar\beta}=0$ when $J=J_0$.
The requirement $J^2=-\Id$ yields
\begin{equation*}
 -Z_\alpha = ( J_\alpha{}^\beta J_\beta{}^\gamma + J_\alpha{}^{\bar\beta}J_{\bar\beta}{}^{\gamma} ) Z_\gamma + ( J_\alpha{}^\beta J_\beta{}^{\bar\gamma} + J_\alpha{}^{\bar\beta}J_{\bar\beta}{}^{\bar\gamma} ) Z_{\bar\gamma} ,
\end{equation*}
where $J_{\bar\alpha}{}^{\bar\beta} := \overline{J_\alpha{}^\beta}$ and $J_{\bar\alpha}{}^\beta := \overline{J_\alpha{}^{\bar\beta}}$.
Therefore
\begin{equation}
 \label{eqn:J2-conclusion}
 \begin{aligned}
  J_\alpha{}^\beta J_\beta{}^\gamma + J_\alpha{}^{\bar\beta}J_{\bar\beta}{}^\gamma & = -\delta_\alpha^\gamma , \\
  J_\alpha{}^\beta J_\beta{}^{\bar\gamma} + J_\alpha{}^{\bar\beta}J_{\bar\beta}{}^{\bar\gamma} & = 0 .
 \end{aligned}
\end{equation}
Since $Z_\alpha - iJZ_\alpha$ is in the $(+i)$-eigenspace of $J$, the requirement that $J$ is partially integrable implies that
\begin{equation}
 \label{eqn:partially-integrable-conclusion}
 \begin{aligned}
  0 & = \theta\left( [ Z_\alpha - iJZ_\alpha , Z_\beta - iJZ_\beta ] \right) \\
   & = -J_{\beta\alpha} + J_{\alpha\beta} + iJ_\alpha{}^\gamma J_{\beta\gamma} - iJ_{\alpha\gamma}J_\beta{}^\gamma .
 \end{aligned}
\end{equation}
These observations allow us to identify the formal tangent space $T_{J_0}\mathcal{C}$:

\begin{lem}
 \label{tangent-space}
 Let $(M^{2n+1},\xi)$ be a contact manifold and let $J_0 \in \mathcal{C}$.
 Then
 \begin{equation}
  \label{eqn:tangent-space}
  T_{J_0}\mathcal{C} = \left\{ E_\alpha{}^{\bar\beta} \theta^\alpha \otimes Z_{\bar\beta} + E_{\bar\alpha}{}^\beta \theta^{\bar\alpha} \otimes Z_\beta \mathrel{}:\mathrel{} E_{\bar\alpha}{}^\beta = \overline{E_\alpha{}^{\bar\beta}} , E_{\alpha\beta} = E_{\beta\alpha} \right\} .
 \end{equation}
\end{lem}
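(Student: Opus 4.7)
The plan is to identify $\mathcal{C}$ locally as the zero set of the two algebraic constraints that define a partially integrable almost CR structure---namely $J^2 = -\Id$ (equation \eqref{eqn:J2-conclusion}) and the partial integrability relation \eqref{eqn:partially-integrable-conclusion}---and then read off the tangent space at $J_0$ by linearizing these constraints. A general element of $T_{J_0}\End(\xi)$, written in the complex frame $\{Z_\alpha, Z_{\bar\alpha}\}$ adapted to $J_0$, decomposes as
\begin{equation*}
 E = E_\alpha{}^\beta \theta^\alpha \otimes Z_\beta + E_\alpha{}^{\bar\beta}\theta^\alpha\otimes Z_{\bar\beta} + E_{\bar\alpha}{}^\beta\theta^{\bar\alpha}\otimes Z_\beta + E_{\bar\alpha}{}^{\bar\beta}\theta^{\bar\alpha}\otimes Z_{\bar\beta},
\end{equation*}
and reality of $E$ as an endomorphism of $\xi$ forces $E_{\bar\alpha}{}^{\bar\beta} = \overline{E_\alpha{}^\beta}$ and $E_{\bar\alpha}{}^\beta = \overline{E_\alpha{}^{\bar\beta}}$.

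The first step is to linearize \eqref{eqn:J2-conclusion} using $J_\alpha{}^\beta|_{J_0} = i\delta_\alpha^\beta$ and $J_\alpha{}^{\bar\beta}|_{J_0} = 0$. The top equation in \eqref{eqn:J2-conclusion} collapses to $2iE_\alpha{}^\gamma = 0$, so $E_\alpha{}^\beta = 0$ and hence $E_{\bar\alpha}{}^{\bar\beta} = 0$ by reality; the bottom equation reduces to $iE_\alpha{}^{\bar\gamma} - iE_\alpha{}^{\bar\gamma} = 0$ and gives no further condition. The second step is to linearize \eqref{eqn:partially-integrable-conclusion} at $J_0$, where $J_{\alpha\beta}|_{J_0} = 0$ and $J_\alpha{}^\gamma|_{J_0} = i\delta_\alpha^\gamma$. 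The cross terms involving $\dot J_\alpha{}^\gamma$ drop since they are multiplied by $J_{\beta\gamma}|_{J_0}=0$, and the remaining terms collapse to $2(E_{\alpha\beta} - E_{\beta\alpha}) = 0$. Combining the two steps yields exactly the description in \eqref{eqn:tangent-space}.

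It remains to confirm that every such $E$ is realized by a genuine curve, so that \eqref{eqn:tangent-space} is an equality and not merely a containment. Given $E$ as above, define $J_t(Z_\alpha) := i Z_\alpha + tE_\alpha{}^{\bar\beta}Z_{\bar\beta}$ on $T^{1,0}$ and extend by reality; then correct $J_t$ by an $O(t^2)$ term along the $Z_\beta$ direction, chosen so that $J_t^2 = -\Id$ exactly for small $t$. The resulting $J_t$ is smooth in $t$, equals $J_0$ at $t=0$, and has derivative $E$. The symmetry $E_{\alpha\beta} = E_{\beta\alpha}$ then guarantees that \eqref{eqn:partially-integrable-conclusion} is satisfied to first order, so $J_t \in \mathcal{C}$ modulo higher-order terms; a standard application of the implicit function theorem (the constraints at $J_0$ are regular, as the linearization is surjective onto its image) upgrades this to an honest path in $\mathcal{C}$. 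The only real obstacle is clerical---keeping the symmetries, conjugations, and vanishing background values straight in the two linearizations---after which the identification is immediate.
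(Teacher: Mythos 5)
The first half of your argument---linearizing $J^2=-\Id$ to kill $\dot J_\alpha{}^\beta$ and linearizing the partial-integrability relation to force $\dot J_{\alpha\beta}=\dot J_{\beta\alpha}$---is exactly the paper's computation and establishes the inclusion $T_{J_0}\mathcal{C}\subseteq\mathcal{E}_{J_0}$. The gap is in the reverse inclusion. Satisfying \eqref{eqn:partially-integrable-conclusion} ``to first order'' does not put $J_t$ in $\mathcal{C}$, and the appeal to the implicit function theorem does not close this: the phrase ``the linearization is surjective onto its image'' is vacuously true of any linear map and is not a regularity hypothesis. What the IFT would actually require is that the constraint map cutting out $\mathcal{C}$ (inside, say, $\{J:J^2=-\Id\}$) be a submersion near $J_0$, or equivalently that $\mathcal{C}$ be a smooth submanifold there---which is essentially the statement being proved. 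There is also a function-space issue: the naive IFT does not apply in $C^\infty$; one must use that both constraints are pointwise algebraic, so that $\mathcal{C}$ is the space of sections of a fiber bundle whose fiber at $p$ is the (finite-dimensional, smooth) space of $d\theta_p$-compatible complex structures on $\xi_p$. None of this is set up in your write-up.

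The paper sidesteps the entire issue with an explicit path: since any $E\in\mathcal{E}_{J_0}$ exchanges $T^{1,0}$ and $T^{0,1}$, it anticommutes with $J_0$, so $J_t:=J_0\exp(-tJ_0E)$ satisfies $J_t^2=-\Id$ identically in $t$; the symmetry $E_{\alpha\beta}=E_{\beta\alpha}$ (together with reality) says $E$ is self-adjoint for the Levi form, which keeps $d\theta$ invariant under $J_t$, i.e.\ keeps \eqref{eqn:partially-integrable-conclusion} satisfied exactly along the path. I recommend replacing your ``correct to second order and invoke the IFT'' step with this exponential path, or else honestly carrying out the fiberwise finite-dimensional argument (identifying the fiber with $Sp(2n,\mathbb{R})/U(n)$); as written, the surjectivity half of your proof is not complete.
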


\begin{proof}
 Denote by $\mathcal{E}_{J_0} $ the right-hand side of Equation~\eqref{eqn:tangent-space}.

 Let $J_t \in \mathcal{C}$, $t \in (-\varepsilon,\varepsilon)$, be a one-parameter family of partially integrable CR structures with $J_0$ equal to the given background element of $\mathcal{C}$.
 Set $\dot J := \left. \frac{\partial}{\partial t} \right|_{t=0} J$.
 Suppressing the notation $t$, we define the one-parameter families $J_\alpha{}^\beta$, $J_\alpha{}^{\bar\beta}$, and their conjugates by Equation~\eqref{eqn:defn-J}.
 Recall that $J_\alpha{}^\beta(0) = i\delta_\alpha^\beta$ and $J_\alpha{}^{\bar\beta}(0) = 0$.
 Differentiating Equations~\eqref{eqn:J2-conclusion} and evaluating at $t=0$ yields $\dot J_\alpha{}^\beta = 0$.
 Differentiating Equation~\eqref{eqn:partially-integrable-conclusion} and evaluating at $t=0$ yields $\dot J_{\alpha\beta} = \dot J_{\beta\alpha}$.
 Therefore $T_{J_0}\mathcal{C} \subseteq \mathcal{E}_{J_0}$.
 
 Now let $E \in \mathcal{E}_{J_0} $.
 Then $J_0E + EJ_0 = 0$, and hence
 \begin{equation*}
  J_0\exp(-J_0E) = \exp(J_0E)J_0 .
 \end{equation*}
 Thus $J_0\exp(-J_0E) \in \mathcal{C}$.
 It follows that $J_t := J_0\exp(-tJ_0E)$ is a path in $\mathcal{C}$ with $\dot J = E$.
 Hence $\mathcal{E}_{J_0} \subseteq T_{J_0}\mathcal{C}$.
\end{proof}

Matsumoto showed~\cite{Matsumoto2014}*{Proposition~3.1} that there is a unique connection on $(M^{2n+1},J,\theta)$ for which $\ker\theta$, $T$, $J$, and $L_\theta$ are parallel and the torsion is as simple as possible, generalizing the Tanaka--Webster connection~\cites{Webster,Tanaka}.
More precisely, given an admissible coframe $\{ \theta^\alpha \}$, define the \emph{connection one-forms} $\omega_\alpha{}^\beta$ by
\begin{align*}
 d\theta^\alpha & = \theta^\beta \wedge \omega_\beta{}^\alpha + \theta \wedge \tau^\alpha - \frac{1}{2}N_{\bar\beta\bar\sigma}{}^\alpha \theta^{\bar\beta} \wedge \theta^{\bar\sigma} , & \tau^\alpha & = A^\alpha{}_{\bar\beta}\theta^{\bar\beta} , \\
 dh_{\alpha\bar\beta} & = \omega_{\alpha\bar\beta} + \omega_{\bar\beta\alpha} , & \omega_{\bar\beta\alpha} & = \overline{\omega_{\beta\bar\alpha}},
\end{align*}
where $N_{\bar\alpha\bar\beta}{}^\gamma$ denotes the restriction
\begin{equation*}
 N(\bar Z,\bar W) := \pi_{T^{1,0}}[\bar Z,\bar W]
\end{equation*}
of the Nijenhuis tensor to $T^{0,1} \otimes T^{0,1}$.
Note that $N_{\bar\alpha\bar\beta}{}^\gamma=0$ if and only if $J$ is integrable.
Matsumoto also showed~\cite{Matsumoto2014}*{Proposition~3.4} that $A_{\alpha\beta} = A_{\beta\alpha}$.
The \emph{MTW connection} is $\nabla Z_\alpha := \omega_\alpha{}^\beta \otimes Z_\beta$.
Matsumoto showed~\cite{Matsumoto2014}*{Lemma~3.5} that
\begin{equation}
 \label{eqn:matsumoto-commutator}
 \begin{split}
  \nabla_\beta\nabla_\alpha u & = \nabla_\alpha\nabla_\beta u - N_{\alpha\beta}{}^{\bar\gamma}\nabla_{\bar\gamma} , \\
  \nabla_{\bar\beta}\nabla_\alpha u & = \nabla_\alpha\nabla_{\bar\beta}u + ih_{\alpha\bar\beta}\nabla_0u , \\
  \nabla_\alpha\nabla_0 u & = \nabla_0 \nabla_\alpha u + A_\alpha{}^{\bar\beta}\nabla_{\bar\beta} u ,
 \end{split}
\end{equation}
for all $u \in C^\infty(M)$.

The \emph{MTW scalar curvature} $R := R_\alpha{}^\alpha{}_\beta{}^\beta$ is determined via the curvature two-forms $\Omega_\alpha{}^\beta := d\omega_\alpha{}^\beta - \omega_\alpha{}^\gamma \wedge \omega_\gamma{}^\beta$:
\begin{equation}
 \label{eqn:curvature-equations}
 \begin{aligned}
  \Omega_\alpha{}^\beta & \equiv R_\alpha{}^\beta{}_{\gamma\bar\epsilon} \, \theta^\gamma \wedge \theta^{\bar\epsilon} + V_{\alpha}{}^\beta{}_{\gamma\epsilon}\theta^\gamma \wedge \theta^\epsilon + V_\alpha{}^\beta{}_{\bar\gamma\bar\epsilon}\theta^{\bar\gamma} \wedge\theta^{\bar\epsilon} \mod \theta , \\
  V_{\alpha}{}^\beta{}_{\gamma\epsilon} & = i\delta_{[\gamma}^\beta A_{\epsilon]\alpha} + \frac{1}{2}\nabla^\beta N_{\gamma\epsilon\alpha} , \\
  V_{\alpha}{}^\beta{}_{\bar\gamma\bar\epsilon} & = ih_{\alpha[\bar\epsilon}A_{\bar\gamma]}{}^\beta - \frac{1}{2}\nabla_\alpha N_{\bar\epsilon\bar\gamma}{}^\beta ;
 \end{aligned}
\end{equation}
see~\cite{Matsumoto2014}*{Equations~(3.7)--(3.9)}.
This generalizes the Tanaka--Webster scalar curvature to partially integrable pseudohermitian manifolds.

We also have commutator identities (cf.\ \cite{Lee1}*{Lemma~2.3}):
If $\sigma = \sigma_\alpha\theta^\alpha$, then
\begin{equation*}
 d\sigma_\alpha = \nabla_\beta\sigma_\alpha \, \theta^\beta + \nabla_{\bar\gamma}\sigma_\alpha \, \theta^{\bar\gamma} + \nabla_0\sigma_\alpha \, \theta + \sigma_\varepsilon \omega_\alpha{}^\varepsilon .
\end{equation*}
Computing the $\theta^\beta\wedge\theta^{\bar\gamma}$- and $\theta \wedge \theta^{\bar\beta}$-components of $d^2\sigma_\alpha = 0$ yields
\begin{align}
 \label{eqn:curv-commutator} \nabla_{\bar\gamma}\nabla_\beta\sigma_{\alpha} & = \nabla_\beta\nabla_{\bar\gamma}\sigma_{\alpha} + ih_{\beta\bar\gamma}\nabla_0\sigma_{\alpha} + R_\alpha{}^\varepsilon{}_{\beta\bar\gamma}\sigma_{\varepsilon} , \\
 \label{eqn:tor-commutator} \nabla_{\bar\beta}\nabla_0\sigma_{\alpha} & = \nabla_0\nabla_{\bar\beta}\sigma_{\alpha} + A_{\bar\beta}{}^\varepsilon\nabla_\varepsilon\sigma_{\alpha} + \sigma_\varepsilon\nabla_\alpha A_{\bar\beta}{}^\varepsilon - A_\alpha{}^{\bar\rho}N_{\bar\beta\bar\rho}{}^\epsilon\sigma_\epsilon ,
\end{align}
respectively.

\section{$R$-singular spaces}\label{section2}

The MTW scalar curvature of a pseudohermitian manifold depends on both the choice of CR structure and the choice of contact form.
Motivated by general terminology in the Riemannian setting~\cite[Definition~5.5]{CaseLinYuan}, we say that a pseudohermitian manifold $(M,J,\theta)$ is $R$-singular if the adjoint of the linearization of the MTW scalar curvature has nontrivial kernel.
This section will make that notion precise.

First we use general transformation formulas of Matsumoto~\cite{Matsumoto2014} to compute the linearization of the MTW scalar curvature and torsion over all choices of contact form.
These generalize formulas of Lee~\cite{Lee1}.

\begin{lem}
 \label{cr-linearization}
 Let $(M^{2n+1},J,\theta)$ be a pseudohermitian manifold and let $\theta_t \in \mathcal{K}$, $t \in (-\varepsilon,\varepsilon)$, be a one-parameter family of contact forms with $\theta_0=\theta$.
 Define $u \in C^\infty(M)$ by $u\theta = \left. \frac{\partial}{\partial t}\right|_{t=0} \theta_t$.
 Then
 \begin{align}
  \label{eqn:cr-linearize-R} \left. \frac{\partial}{\partial t} \right|_{t=0} R^{J,\theta_t} & = -(n+1)\Delta_bu - Ru , \\
  \label{eqn:cr-linearize-A} \left. \frac{\partial}{\partial t} \right|_{t=0} A_{\alpha\beta}^{J,\theta_t} & = \frac{i}{2}(u_{\alpha\beta} + u_{\beta\alpha}) + \frac{i}{2}(N_{\gamma\alpha\beta} + N_{\gamma\beta\alpha})u^\gamma ,
 \end{align}
 where $\Delta_b := 2\Real\nabla^\alpha\nabla_\alpha$.
\end{lem}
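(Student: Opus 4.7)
The approach is to linearize Matsumoto's transformation formulas (generalizing Lee's~\cite{Lee1}) for the MTW scalar curvature and torsion under a conformal change of contact form $\hat\theta = e^{2\Upsilon}\theta$ with the CR structure $J$ held fixed. Write $\theta_t = e^{2\Upsilon_t}\theta$ with $\Upsilon_t := \frac{1}{2}\log(1+tu+O(t^2))$, so that $\dot\Upsilon := \partial_t\Upsilon_t|_{t=0} = u/2$. A convenient choice of admissible coframes is $\theta_t^\alpha := \theta^\alpha + 2i\Upsilon_t^\alpha\theta$; a direct computation using $d\theta_t = e^{2\Upsilon_t}(2d\Upsilon_t\wedge\theta + d\theta)$ shows that these annihilate the Reeb vector field $T_t$ of $\theta_t$ and make the new Levi form transform simply as $h^t_{\alpha\bar\beta} = e^{2\Upsilon_t}h_{\alpha\bar\beta}$, which facilitates the comparison of MTW connections at different values of $t$.

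In the integrable case, Lee's scalar curvature formula can be written as
\[
 \hat R = e^{-2\Upsilon}\bigl(R - 2(n+1)\Delta_b\Upsilon + Q(\nabla\Upsilon)\bigr),
\]
with $Q$ quadratic in $\nabla\Upsilon$; the sign of the $\Delta_b$ term matches because this paper uses the geometer's (negative-operator) convention $\Delta_b = 2\Real\nabla^\alpha\nabla_\alpha$, opposite to Lee's. Matsumoto's partially integrable extension adds further Nijenhuis-dependent terms, which must vanish to second order in $\Upsilon$ (otherwise the integrable case would not reduce to Lee's formula). Differentiating at $t=0$ kills these higher-order contributions and substituting $\dot\Upsilon = u/2$ yields~\eqref{eqn:cr-linearize-R}. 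The analogous transformation law for the torsion has the symmetrized form
\[
 \hat A_{\alpha\beta} = A_{\alpha\beta} + i(\Upsilon_{\alpha\beta}+\Upsilon_{\beta\alpha}) + i(N_{\gamma\alpha\beta}+N_{\gamma\beta\alpha})\Upsilon^\gamma + O(\Upsilon^2),
\]
symmetric in $(\alpha,\beta)$ as required by Matsumoto's identity $A_{\alpha\beta}=A_{\beta\alpha}$; the Nijenhuis correction is forced because by~\eqref{eqn:matsumoto-commutator} the second covariant derivative $u_{\alpha\beta}$ is no longer symmetric in $(\alpha,\beta)$ when $N\neq 0$. Linearizing and substituting $\dot\Upsilon = u/2$ yields~\eqref{eqn:cr-linearize-A}.

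The main obstacle is extracting Matsumoto's transformation formulas sharply enough to linearize cleanly, and tracking how the Nijenhuis corrections are distributed between the scalar curvature and the torsion formulas. A self-contained alternative is to differentiate the MTW structure equations directly: Matsumoto's uniqueness theorem~\cite{Matsumoto2014} identifies $\omega_\alpha{}^\beta$ and $\tau^\alpha$ as the unique solutions of $d\theta_t^\alpha = \theta_t^\beta\wedge\omega^t_\beta{}^\alpha + \theta_t\wedge\tau^\alpha_t - \frac{1}{2}N_{\bar\beta\bar\sigma}{}^\alpha\theta_t^{\bar\beta}\wedge\theta_t^{\bar\sigma}$ and $dh^t_{\alpha\bar\beta}=\omega^t_{\alpha\bar\beta}+\omega^t_{\bar\beta\alpha}$ with $\tau^\alpha_t$ of the stated form, so differentiating in $t$ produces a linear system for $\dot\omega_\alpha{}^\beta$ and $\dot\tau^\alpha$; substituting into~\eqref{eqn:curvature-equations} and tracing recovers $\dot R$, while the symmetric part of the equation for $\dot\tau^\alpha$ produces the torsion linearization. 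Either route relies essentially on~\eqref{eqn:matsumoto-commutator} to place the Nijenhuis contribution correctly in the final formulas.
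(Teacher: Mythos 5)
Your overall strategy is the same as the paper's: both proofs cite Matsumoto's conformal transformation formulas for the (traced) curvature and the torsion and then linearize, with your $\hat\theta = e^{2\Upsilon}\theta$, $\dot\Upsilon = u/2$ differing from the paper's $\hat\theta = e^{\Upsilon}\theta$, $\dot\Upsilon = u$ only by a harmless normalization. The torsion half of your argument is fine: you write down the symmetrized transformation law for $A_{\alpha\beta}$ (Matsumoto's Equation~(3.12)) and linearize, which is exactly what the paper does.

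The one step that does not hold up is your justification for discarding Nijenhuis contributions in the scalar curvature formula. You assert that the $N$-dependent terms in Matsumoto's extension of Lee's formula ``must vanish to second order in $\Upsilon$ (otherwise the integrable case would not reduce to Lee's formula).'' This inference is backwards: any term containing a factor of $N$ vanishes identically when $J$ is integrable, so consistency with Lee's integrable-case formula places \emph{no} constraint whatsoever on $N$-dependent terms --- a term like $N_{\alpha\beta\gamma}\Upsilon^\gamma\nabla^\alpha\nabla^\beta\Upsilon$, linear in each of $N$ and first-order in $\Upsilon$ after linearization, would be invisible to your consistency check yet would contribute to $\dot R$. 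The correct way to close this step is simply to consult Matsumoto's formula: tracing his Equation~(3.13) for the Ricci tensor gives $e^{\Upsilon}\widehat{R} = R - (n+1)\Delta_b\Upsilon - n(n+1)\Upsilon_\gamma\Upsilon^\gamma$, which happens to contain no Nijenhuis terms at all; linearizing this identity then yields~\eqref{eqn:cr-linearize-R}. So the conclusion you reach is right, but the reason you give for it is not a proof, and this is precisely the point where the partially integrable case could in principle have differed from the integrable one. Your proposed self-contained alternative (differentiating the structure equations and tracing~\eqref{eqn:curvature-equations}) would avoid this issue entirely and is essentially the method the paper uses in Proposition~\ref{prop1.3} for the $J$-linearization.
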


\begin{proof}
 Let $\widehat{\theta} = e^{\Upsilon}\theta$.
 Taking the trace of Matsumoto's formula~\cite{Matsumoto2014}*{Equation~(3.13)} for the Ricci tensors $R_{\alpha\bar\beta} := R_\gamma{}^\gamma{}_{\alpha\bar\beta}$ and $\widehat{R}_{\alpha\bar\beta}$ of $\theta$ and $\widehat{\theta}$, respectively, yields
 \begin{equation*}
  e^{\Upsilon}\widehat{R} = R - (n+1)\Delta_b\Upsilon - n(n+1)\Upsilon_\gamma \Upsilon^\gamma .
 \end{equation*}
 Linearizing yields Equation~\eqref{eqn:cr-linearize-R}.
 Equation~\eqref{eqn:cr-linearize-A} follows by linearizing Matsumoto's formula~\cite{Matsumoto2014}*{Equation~(3.12)} for the torsion of $\widehat{\theta}$.
\end{proof}

Next we derive the linearizations of the MTW scalar curvature and torsion over all choices of partially integrable CR structures.
These generalize formulas of Cheng and Lee~\cite{ChengLee} and Afeltra, Cheng, Malchiodi, and Yang~\cite{Afeltra} in three and general dimensions, respectively.

\begin{prop}
 \label{prop1.3}
 Let $(M^{2n+1},J,\theta)$ be a pseudohermitian manifold and let $J_t \in \mathcal{C}$, $t \in (-\varepsilon,\varepsilon)$, be a one-parameter family of CR structures with $J_0 = J$.
 Define $\dot{J} \in \End(\ker\theta)$ by $\dot{J} := \left.\frac{\partial}{\partial t}\right|_{t=0} J_t$.
 Then
 \begin{align}
  \label{eqn:linearize-R-CR} \left. \frac{\partial}{\partial t} \right|_{t=0} R^{J_t,\theta} & = 2\Real (i\nabla^\alpha\nabla^\beta E_{\alpha\beta} + i\nabla_\alpha(N^{\beta\alpha\gamma}E_{\beta\gamma}) - nA^{\alpha\beta}E_{\alpha\beta}) , \\
  \label{eqn:linearize-torsion-CR} \left. \frac{\partial}{\partial t} \right|_{t=0} A_{\alpha\beta}^{J_t,\theta} & = i\nabla_0E_{\alpha\beta} ,
 \end{align}
 where $\dot{J} = E_{\alpha}{}^{\bar\beta}\theta^\alpha \otimes Z_{\bar\beta} + E_{\bar\alpha}{}^{\beta}\theta^{\bar\alpha} \otimes Z_{\beta}$.
\end{prop}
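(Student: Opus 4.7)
The plan is to differentiate the MTW structure and curvature equations directly at $t=0$. First I choose a one-parameter family of admissible coframes $\{\theta^\alpha_t\}$ for $(J_t,\theta)$ normalized by $\dot\theta^\alpha(Z_\beta)=0$. Since the Reeb vector field $T$ is determined by $\theta$ alone, and since the $(+i)$-eigenspace of $J_t$ is spanned, to first order in $t$, by $Z_\alpha-\tfrac{it}{2}E_\alpha{}^{\bar\beta}Z_{\bar\beta}$, compatibility with $T^{0,1}_{J_t}$ forces $\dot\theta^\alpha=-\tfrac{i}{2}E^\alpha{}_{\bar\beta}\theta^{\bar\beta}$. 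Linearizing $d\theta=ih^t_{\alpha\bar\beta}\theta^\alpha_t\wedge\theta^{\bar\beta}_t$ and using the symmetry $E_{\alpha\beta}=E_{\beta\alpha}$ from Lemma~\ref{tangent-space} then yields $\dot h_{\alpha\bar\beta}=0$, which is the key simplification that eliminates $\dot h$ corrections when tracing.

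I then differentiate the structure equation
\begin{equation*}
  d\theta^\alpha = \theta^\beta\wedge\omega_\beta{}^\alpha + \theta\wedge A^\alpha{}_{\bar\beta}\theta^{\bar\beta} - \tfrac{1}{2}N_{\bar\beta\bar\sigma}{}^\alpha\theta^{\bar\beta}\wedge\theta^{\bar\sigma}
\end{equation*}
at $t=0$. By Matsumoto's uniqueness theorem for the MTW connection, the linearized equation, together with the metric compatibility $\dot\omega_{\alpha\bar\beta}+\dot\omega_{\bar\beta\alpha}=0$ and the pure-torsion requirement, uniquely determines $\dot\omega_\alpha{}^\beta$, $\dot A^\alpha{}_{\bar\beta}$, and $\dot N_{\bar\beta\bar\sigma}{}^\alpha$ in terms of covariant derivatives of $E$. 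Reading off the $\theta\wedge\theta^{\bar\beta}$-component and applying the commutator~\eqref{eqn:tor-commutator} yields~\eqref{eqn:linearize-torsion-CR}; the $\theta^{\bar\beta}\wedge\theta^{\bar\sigma}$-component expresses $\dot N_{\bar\beta\bar\sigma}{}^\alpha$ in terms of $\nabla_{\bar\beta}E^\alpha{}_{\bar\sigma}$.

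For the MTW scalar curvature, I linearize the curvature equation $\Omega_\alpha{}^\beta=d\omega_\alpha{}^\beta-\omega_\alpha{}^\gamma\wedge\omega_\gamma{}^\beta$ to get $\dot\Omega_\alpha{}^\beta=d\dot\omega_\alpha{}^\beta-\dot\omega_\alpha{}^\gamma\wedge\omega_\gamma{}^\beta-\omega_\alpha{}^\gamma\wedge\dot\omega_\gamma{}^\beta$, then extract its $\theta^\gamma\wedge\theta^{\bar\epsilon}$-component as in~\eqref{eqn:curvature-equations} and trace to obtain $\dot R$. Since $\dot h=0$, the trace commutes with $\partial_t$, and $\dot R$ reduces to a sum of two covariant derivatives of $\dot\omega$ together with contractions against $A$, $\nabla A$, $N$, and $\nabla N$; substituting the formulas for $\dot\omega$ and $\dot N$ from the previous step produces an explicit (but long) expression built from $\nabla\nabla E$, $AE$, and $\nabla(NE)$.

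The hard part is the algebraic simplification of this expression to the form~\eqref{eqn:linearize-R-CR}. This requires repeated application of the commutator identities~\eqref{eqn:matsumoto-commutator}--\eqref{eqn:tor-commutator} to convert mixed-index derivatives into the divergence form $\nabla^\alpha\nabla^\beta E_{\alpha\beta}$, together with constant use of the symmetry $E_{\alpha\beta}=E_{\beta\alpha}$. The $-nA^{\alpha\beta}E_{\alpha\beta}$ contribution arises from the $ih_{\alpha\bar\beta}\nabla_0$-correction in~\eqref{eqn:curv-commutator} combined with~\eqref{eqn:linearize-torsion-CR} and the symmetry $A_{\alpha\beta}=A_{\beta\alpha}$, the factor $n$ being the trace $h^{\alpha\bar\beta}h_{\alpha\bar\beta}$. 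The $i\nabla_\alpha(N^{\beta\alpha\gamma}E_{\beta\gamma})$ term collects the Nijenhuis contributions arising both directly from $\dot N$ and from the $N_{\alpha\beta}{}^{\bar\gamma}\nabla_{\bar\gamma}$-correction in the first line of~\eqref{eqn:matsumoto-commutator}. Taking the real part, natural because $R$ is real, produces the claimed formula.
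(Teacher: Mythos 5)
Your overall strategy (differentiate the MTW structure and curvature equations, read off $\dot\omega$, $\dot A$, $\dot N$ from the linearized structure equation, then linearize $\Omega_\alpha{}^\beta$ and trace) is exactly the paper's approach, including the key observations $\dot h_{\alpha\bar\beta}=0$ and that the simplification hinges on the symmetry $E_{\alpha\beta}=E_{\beta\alpha}$. However, there are several inaccuracies in the details of the roadmap, and the substantive computation is not carried out. First, the normalization: you derive $\dot\theta^\alpha=-\tfrac{i}{2}E^\alpha{}_{\bar\beta}\theta^{\bar\beta}$, whereas the paper works with $\theta^\alpha_t:=\theta^\alpha-iJ_t\theta^\alpha$ and obtains $\dot\theta^\alpha=-iE_{\bar\beta}{}^\alpha\theta^{\bar\beta}$; these differ by a factor of $2$ (your coframe is the one that reduces to $\theta^\alpha$ at $t=0$; the paper's reduces to $2\theta^\alpha$), and if you propagate your convention literally you will land on $\dot A_{\alpha\beta}=\tfrac{i}{2}\nabla_0E_{\alpha\beta}$, not the stated $i\nabla_0 E_{\alpha\beta}$, so you must reconcile the normalization with the definition of $E$ before matching the claimed formulas. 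Second, your claim that extracting~\eqref{eqn:linearize-torsion-CR} requires the commutator~\eqref{eqn:tor-commutator} is incorrect: in the paper the torsion linearization falls out directly by matching $\theta\wedge\theta^{\bar\beta}$-coefficients of $d\dot\theta^\alpha$ against the differentiated structure equation at a point where $\omega_\alpha{}^\beta(p)=0$; no curvature commutator enters. Third, your account of where the $-nA^{\alpha\beta}E_{\alpha\beta}$ term comes from is not right: it is not produced by the $ih_{\alpha\bar\beta}\nabla_0$ correction in~\eqref{eqn:curv-commutator}, but rather by the $\theta$-coefficient of $\dot\omega_\beta{}^\alpha$ (which is $-i(E_\beta{}^{\bar\gamma}A_{\bar\gamma}{}^\alpha+A_\beta{}^{\bar\gamma}E_{\bar\gamma}{}^\alpha)$) being hit by $d\theta=ih_{\gamma\bar\epsilon}\theta^\gamma\wedge\theta^{\bar\epsilon}$ inside $d\dot\omega$, with the factor $n$ arising when $h_{\gamma\bar\epsilon}$ is traced against $h^{\gamma\bar\epsilon}$. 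Finally, the ``hard part'' you defer --- computing $\dot\omega_\beta{}^\alpha$ explicitly, assembling $\dot\Omega_\alpha{}^\beta=d\dot\omega_\alpha{}^\beta$, subtracting the $V$-terms, and tracing --- is precisely the content of the proposition; as written the proposal is a plausible outline rather than a proof.
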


\begin{proof}
 Lemma~\ref{tangent-space} implies that we may write $\dot J = 2\Real E_\alpha{}^{\bar\beta}\theta^\alpha \otimes Z_{\bar\beta}$.

 Pick $p \in M$ and let $\{ \theta^\alpha \}$ be an admissible coframe for $(M^{2n+1},J,\theta)$ such that $\omega_\alpha{}^\beta(p)=0$.
 Set
 \begin{align*}
  \theta_t^\alpha & := \theta^\alpha - iJ_t\theta^\alpha = ( \delta_\beta^\alpha - iJ_\beta{}^\alpha )\theta^\beta - iJ_{\bar\beta}{}^\alpha\theta^{\bar\beta} ,
 \end{align*}
 where $J_\alpha{}^\beta$ and $J_\alpha{}^{\bar\beta}$ denote the components of $J_t$ as in Equation~\eqref{eqn:defn-J}.
 Then $\{ \theta_t^\alpha \}$ is an admissible coframe for $(M^{2n+1},J_t,\theta)$.
 Clearly $\dot\theta^\alpha = -iE_{\bar\beta}{}^\alpha\theta^{\bar\beta}$.
 Since $\theta$ is fixed, we deduce that
 \begin{equation*}
  0 = d\dot\theta = i\dot h_{\alpha\bar\beta} \theta^\alpha \wedge \theta^{\bar\beta} .
 \end{equation*}
 Hence $\dot h_{\alpha\bar\beta} = 0$.

 We first compute the linearization of the connection one-forms.
 In this paragraph we compute modulo $\theta^{\bar\beta} \wedge \theta^{\bar\gamma}$.
 On the one hand, differentiating the equation for $d\theta^\alpha$ at $t=0$ yields
 \begin{align*}
  d\dot\theta^\alpha & \equiv \theta^\beta \wedge \dot\omega_\beta{}^\alpha + \dot A^\alpha{}_{\bar\gamma} \theta \wedge \theta^{\bar\gamma} + A^\alpha{}_{\bar\gamma} \theta \wedge \dot\theta^{\bar\gamma} - \frac{1}{2}N_{\bar\gamma\bar\sigma}{}^\alpha (\dot\theta^{\bar\gamma} \wedge \theta^{\bar\sigma} + \theta^{\bar\gamma} \wedge \dot\theta^{\bar\sigma}) \\
  & \equiv \theta^\beta \wedge \left( \dot\omega_\beta{}^\alpha - iE_\beta{}^{\bar\gamma}A_{\bar\gamma}{}^\alpha \theta - iE_\beta{}^{\bar\gamma}N_{\bar\gamma\bar\sigma}{}^\alpha\theta^{\bar\sigma} \right) + \dot A^\alpha{}_{\bar\gamma}\theta \wedge \theta^{\bar\gamma} ,
 \end{align*}
 where the second equality uses the skew symmetry $N_{[\bar\gamma\bar\sigma]}{}^\alpha = N_{\bar\gamma\bar\sigma}{}^\alpha$ of the Nijenhuis tensor.
 On the other hand, our formula for $\dot\theta^\alpha$ yields
 \begin{align*}
  d\dot\theta^\alpha & = -id(E_{\bar\gamma}{}^\alpha\theta^{\bar\gamma}) \\
  & \equiv -i\theta^\beta \wedge \left( \nabla_\beta E_{\bar\gamma}{}^\alpha \theta^{\bar\gamma} - A_\beta{}^{\bar\gamma}E_{\bar\gamma}{}^\alpha\theta - \frac{1}{2}N_{\beta\sigma}{}^{\bar\gamma}E_{\bar\gamma}{}^\alpha \theta^\sigma \right) - i\nabla_0E_{\bar\beta}{}^\alpha \theta \wedge \theta^{\bar\beta} \\
  & \equiv -i\theta^\beta \wedge \left( \nabla_\beta E_{\bar\gamma}{}^\alpha \theta^{\bar\gamma} - A_\beta{}^{\bar\gamma}E_{\bar\gamma}{}^\alpha\theta - E^\alpha{}_{\bar\gamma} N^{\bar\gamma}{}_{\sigma\beta}  \theta^{\sigma} \right) - i\nabla_0E_{\bar\beta}{}^\alpha \theta \wedge \theta^{\bar\beta} ,
 \end{align*}
 at $p$, where the last equality uses the identity $N_{[\alpha\beta\gamma]}=0$.
 Comparing the $\theta \wedge \theta^{\bar\beta}$ coefficients yields Equation~\eqref{eqn:linearize-torsion-CR}.
 Comparing the $\theta^\beta$ coefficients and using the equation $\dot\omega_{\alpha\bar\beta} + \dot\omega_{\bar\beta\alpha} = d\dot h_{\alpha\bar\beta} = 0$ yields
 \begin{multline*}
  \dot\omega_\beta{}^\alpha = -i\Bigl( (\nabla^\alpha E_{\gamma\beta} - E^\alpha{}_{\bar\sigma} N^{\bar\sigma}{}_{\gamma\beta})\theta^\gamma + \left( \nabla_\beta E_{\bar\gamma}{}^\alpha - E_\beta{}^{\bar\sigma}N_{\bar\sigma\bar\gamma}{}^\alpha \right)\theta^{\bar\gamma} \\
   - \left( E_\beta{}^{\bar\gamma} A_{\bar\gamma}{}^\alpha + A_\beta{}^{\bar\gamma}E_{\bar\gamma}{}^\alpha \right) \theta \Bigr) .
 \end{multline*}

 We now compute the linearization of the MTW scalar curvature.
 On the one hand, differentiating the equation $\Omega_\alpha{}^\beta = d\omega_\alpha{}^\beta - \omega_\alpha{}^\gamma \wedge \omega_\gamma{}^\beta$ at $p$ yields
 \begin{align*}
  \dot\Omega_\alpha{}^\beta & = d\dot\omega_\alpha{}^\beta \\
   & \equiv \Bigl( i\nabla_{\bar\epsilon}\nabla^\beta E_{\gamma\alpha} - i\nabla_\gamma \nabla_\alpha E_{\bar\epsilon}{}^\beta - i\nabla_{\bar\epsilon}(E^\beta{}_{\bar\zeta}N^{\bar\zeta}{}_{\gamma\alpha}) + i\nabla_\gamma(E_\alpha{}^{\bar\zeta}N_{\bar\zeta\bar\epsilon}{}^\beta) \\
    & \qquad  - (E_\alpha{}^{\bar\zeta}A_{\bar\zeta}{}^\beta + A_{\alpha}{}^{\bar\zeta}E_{\bar\zeta}{}^\beta)h_{\gamma\bar\epsilon} \Bigr) \theta^{\gamma} \wedge \theta^{\bar\epsilon} \mod \theta, \theta^{\gamma} \wedge \theta^{\epsilon} , \theta^{\bar\gamma} \wedge \theta^{\bar\epsilon} .
 \end{align*}
 On the other hand, the definition~\eqref{eqn:curvature-equations} of $R_{\alpha\bar\beta\gamma\bar\epsilon}$ yields
 \begin{align*}
  \dot\Omega_\alpha{}^\beta & \equiv \left( \dot R_\alpha{}^\beta{}_{\gamma\bar\epsilon} - 2iV_\alpha{}^\beta{}_{\gamma\zeta}E^{\zeta}{}_{\bar\epsilon} - 2iV_\alpha{}^\beta{}_{\bar\epsilon\bar\zeta}E^{\bar\zeta}{}_{\gamma} \right) \theta^\gamma \wedge \theta^{\bar\epsilon} \mod \theta, \theta^\gamma \wedge \theta^{\epsilon}, \theta^{\bar\gamma} \wedge \theta^{\bar\epsilon} .
 \end{align*}
 Therefore
 \begin{align*}
  \dot R_\alpha{}^\beta{}_{\gamma\bar\epsilon} & = i\nabla_{\bar\epsilon}\nabla^\alpha E_{\gamma\beta} - i\nabla_\gamma\nabla_\beta E_{\bar\epsilon}{}^\alpha - i\nabla_{\bar\sigma}(E^\alpha{}_{\bar\zeta}N^{\bar\zeta}{}_{\gamma\beta}) + i\nabla_\gamma(E_\alpha{}^{\bar\zeta}N_{\bar\zeta\bar\epsilon}{}^\beta) \\
   & \quad - (E_\alpha{}^{\bar\zeta}A_{\bar\zeta}{}^\beta + A_\alpha{}^{\bar\zeta}E_{\bar\zeta}{}^\beta)h_{\gamma\bar\epsilon} + 2iV_\alpha{}^\beta{}_{\gamma\zeta}E^\zeta{}_{\bar\epsilon} + 2iV_\alpha{}^\beta{}_{\bar\zeta\bar\epsilon}E^{\bar\zeta}{}_\gamma .
 \end{align*}
 Contracting using the identities $\dot h_{\alpha\bar\beta}=0$ and $V_{\alpha\bar\beta(\gamma\epsilon)}=0$ yields Equation~\eqref{eqn:linearize-R-CR}.
\end{proof}

Recall that $\mathcal{K}$ is the space of positive contact forms on $(M^{2n+1},\xi)$.
As in Lemma~\ref{cr-linearization}, we identify $T_\theta\mathcal{K} \cong C^\infty(M)$ by
\begin{equation*}
 \left. \frac{\partial}{\partial t} \right|_{t=0} e^{\Upsilon_t}\theta \cong \left. \frac{\partial}{\partial t} \right|_{t=0} \Upsilon_t
\end{equation*}
for $\Upsilon_t$ a one-parameter family of functions with $\Upsilon_0=0$.
This allows us to compute the linearization $DR \colon T_{(J,\theta)}(\mathcal{C} \times \mathcal{K}) \to C^\infty(M)$,
\begin{equation*}
 DR(E,u) := \left. \frac{\partial}{\partial t} \right|_{t=0} R^{J+tE,e^{tu}\theta} ,
\end{equation*}
of the MTW scalar curvature:
\begin{multline}
 \label{eqn:linearization}
 DR(E,u) = 2\Real\left( i\nabla^\alpha\nabla^\beta E_{\alpha\beta} + i\nabla_\alpha(N^{\beta\alpha\gamma}E_{\beta\gamma}) - nA^{\alpha\beta}E_{\alpha\beta} \right) \\
  - (n+1)\Delta_bu - Ru .
\end{multline}

Let $\Gamma \colon C^\infty(M) \to T_{(J,\theta)}(\mathcal{C} \times \mathcal{K})$ denote the adjoint of $DR$ with respect to the $L^2$-metric on $T_{(J,\theta)}(\mathcal{C} \times \mathcal{K})$ induced by $(J,\theta)$.

\begin{definition}
 A pseudohermitian manifold $(M^{2n+1},J,\theta)$ is \emph{$R$-singular} if there is a nonzero $f \in C^\infty(M)$ such that $\Gamma f = 0$.
\end{definition}

We explicitly characterize $R$-singular spaces:

\begin{lem}
 \label{characterize-R-signular}
 A pseudohermitian manifold $(M^{2n+1},J,\theta)$ is \emph{$R$-singular} if and only if there is an $f \in C^\infty(M)$ such that
 \begin{align}
  \label{eqn:characterize-torsion} 0 & = i\nabla_\alpha\nabla_\beta f - iN_{\alpha\gamma\beta}\nabla^\gamma f + nfA_{\alpha\beta} , \\
  \label{eqn:characterize-scalar} 0 & = (n+1)\Delta_bf + Rf .
 \end{align}
\end{lem}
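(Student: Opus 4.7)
By definition, $\Gamma f = 0$ if and only if $\int_M f \cdot DR(E,u) \, \theta \wedge d\theta^n = 0$ for every $(E,u) \in T_{(J,\theta)}(\mathcal{C} \times \mathcal{K})$. The plan is to substitute the explicit formula~\eqref{eqn:linearization} for $DR$ and then decouple the resulting integral into a scalar piece (linear in $u$) and a CR-structure piece (linear in $E$), since the constraints on $E$ (symmetry $E_{\alpha\beta}=E_{\beta\alpha}$ from Lemma~\ref{tangent-space}) and on $u$ (none, beyond smoothness) are independent.

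The scalar piece is immediate. Setting $E=0$, the vanishing condition reads $\int_M f[-(n+1)\Delta_b u - Ru] \, \theta\wedge d\theta^n = 0$ for every $u \in C^\infty(M)$. Using the formal self-adjointness of $\Delta_b$ with respect to $\theta \wedge d\theta^n$ and the realness of $R$, integration by parts and the arbitrariness of $u$ yield Equation~\eqref{eqn:characterize-scalar}.

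The CR-structure piece is the main computation. The strategy is to expand $2\Real(\cdots) = (\cdots) + \overline{(\cdots)}$, integrate by parts so that all derivatives fall on $f$ rather than on $E$, and then use the arbitrariness of the symmetric tensor $E_{\alpha\beta}$ to extract a pointwise equation. Writing $\nabla^\alpha = h^{\alpha\bar\gamma}\nabla_{\bar\gamma}$ explicitly, two integrations by parts on the $i\nabla^\alpha\nabla^\beta E_{\alpha\beta}$ term produce a contribution of the form $\nabla_{\bar\gamma}\nabla_{\bar\delta} f \cdot E^{\bar\gamma\bar\delta}$, which when combined with its conjugate (coming from $2\Real$) contributes the pure $\nabla_\alpha\nabla_\beta f$ piece of~\eqref{eqn:characterize-torsion}. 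One integration by parts on $i\nabla_\alpha(N^{\beta\alpha\gamma}E_{\beta\gamma})$ produces an $N$-contracted first derivative of $f$, and the $A$-term contributes $nf A_{\alpha\beta}$ directly. Since $E_{\alpha\beta}=E_{\beta\alpha}$, only the symmetric-in-$(\alpha,\beta)$ part of the resulting coefficient is detected, and by uniqueness of the $L^2$-adjoint one obtains the biconditional with~\eqref{eqn:characterize-torsion}.

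The main obstacle is the index bookkeeping in the CR-structure piece: verifying that the Nijenhuis contribution arising from $i\nabla_\alpha(N^{\beta\alpha\gamma}E_{\beta\gamma})$ combines with the commutator correction one picks up when converting second-order derivatives (on barred or raised indices) back into the form $\nabla_\alpha\nabla_\beta f$ to yield precisely $-iN_{\alpha\gamma\beta}\nabla^\gamma f$ with the index ordering stated in~\eqref{eqn:characterize-torsion}. This step uses the commutator identities~\eqref{eqn:matsumoto-commutator}, the antisymmetry of $N_{\alpha\beta\gamma}$ in its first two indices, and the cyclic identity $N_{[\alpha\beta\gamma]} = 0$ already invoked in the proof of Proposition~\ref{prop1.3}. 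The $A$-term poses no difficulty since $A_{\alpha\beta}$ is already symmetric by~\cite{Matsumoto2014}*{Proposition~3.4}.
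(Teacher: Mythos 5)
Your proposal is correct and follows essentially the same route as the paper: pair $f$ against $DR(E,u)$, integrate by parts to put all derivatives on $f$, and decouple the resulting identity into the $u$-piece (giving~\eqref{eqn:characterize-scalar}) and the $E$-piece (giving the symmetrization of~\eqref{eqn:characterize-torsion}, which is equivalent since $E_{\alpha\beta}$ is symmetric); this is exactly Equation~\eqref{eqn:adjoint-linearization} in the paper, from which the lemma is immediate. One small remark: the ``commutator correction'' you anticipate when reordering $\nabla^\alpha\nabla^\beta f$ is antisymmetric in $\alpha,\beta$ and hence drops out upon pairing with the symmetric tensor $E_{\alpha\beta}$, so it does not actually need to be combined with the Nijenhuis term; the latter arises solely from one integration by parts on $i\nabla_\alpha(N^{\beta\alpha\gamma}E_{\beta\gamma})$ followed by relabeling and conjugation.
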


\begin{proof}
 Let $(E,u) \in T_{(J,\theta)}(\mathcal{C} \times \mathcal{K})$ be compactly-supported.
 We compute that
 \begin{equation}
  \label{eqn:adjoint-linearization}
  \begin{split}
  \int_M \langle \Gamma f, (E,u) \rangle \, \theta \wedge d\theta^n & = \int_M fDR(E,u) \, \theta \wedge d\theta^n \\
  & = \int_M \Bigl[ 2\Real(i\nabla^\alpha\nabla^\beta f - iN^{\alpha\gamma\beta}\nabla_\gamma f - nfA^{\alpha\beta})E_{\alpha\beta} \\
   & \qquad\qquad - \bigl((n+1)\Delta_bf + Rf\bigr)u \Bigr] \, \theta \wedge d\theta^n .
  \end{split}
 \end{equation}
 The conclusion readily follows.
\end{proof}

\section{Examples of $R$-singular and non-$R$-singular spaces}\label{section3}

Here we give some examples of $R$-singular and non $R$-singular spaces.
First, compact, torsion-free, MTW scalar-flat manifolds are $R$-singular.
Indeed:

\begin{prop}\label{prop0}
 Let $(M^n,J,\theta)$ be a compact pseudohermitian manifold.
 The following are equivalent:
 \begin{enumerate}
  \item $\mathbb{R} \subseteq \ker\Gamma$.
  \item $\mathbb{R} = \ker\Gamma$.
  \item $R=0$ and $A_{\alpha\beta}=0$.
 \end{enumerate}
\end{prop}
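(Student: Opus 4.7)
The plan is to rely on Lemma~\ref{characterize-R-signular} and dispatch each implication directly from the characterizing system~\eqref{eqn:characterize-torsion}--\eqref{eqn:characterize-scalar}.

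The implication $(2)\Rightarrow(1)$ is immediate. For $(1)\Rightarrow(3)$, I would test the system against the constant function $f\equiv 1$: all covariant derivatives vanish, so~\eqref{eqn:characterize-torsion} collapses to $nA_{\alpha\beta}=0$ and~\eqref{eqn:characterize-scalar} collapses to $R=0$. Since $n\geq 1$, this yields (3).

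For $(3)\Rightarrow(2)$, the inclusion $\mathbb{R}\subseteq\ker\Gamma$ follows by plugging any constant $f$ into the system: with $R=0$ and $A_{\alpha\beta}=0$, both equations are trivially satisfied. The substantive direction is to show any $f\in\ker\Gamma$ is constant. Since $R=0$, equation~\eqref{eqn:characterize-scalar} reduces to $\Delta_b f=0$. Integration by parts on the compact manifold gives
\[0 = \int_M f\Delta_b f \, \theta\wedge d\theta^n = -2\int_M h^{\alpha\bar\beta}(\nabla_\alpha f)(\nabla_{\bar\beta}f) \, \theta\wedge d\theta^n,\]
and the positive-definiteness of $L_\theta$ then forces $\nabla_\alpha f=0$ for every $\alpha$. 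Applying the second line of Matsumoto's commutator~\eqref{eqn:matsumoto-commutator} to $f$ (the left-hand side $\nabla_{\bar\beta}\nabla_\alpha f$ is the $\nabla_{\bar\beta}$-derivative of zero, and similarly for $\nabla_\alpha \nabla_{\bar\beta} f$) yields $ih_{\alpha\bar\beta}\nabla_0 f = 0$, hence $\nabla_0 f=0$ as well. Since $\{T, Z_\alpha, Z_{\bar\alpha}\}$ frames $TM\otimes\mathbb{C}$, all directional derivatives of $f$ vanish, and $f$ is constant on the (connected) manifold.

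The only mildly technical step is the integration-by-parts identity for $\Delta_b$, which is standard on a compact partially integrable pseudohermitian manifold and follows from Stokes' theorem applied with the volume form $\theta\wedge d\theta^n$; I do not anticipate any real obstacle.
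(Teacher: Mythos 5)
Your proposal is correct and follows essentially the same route as the paper: both reduce to Lemma~\ref{characterize-R-signular}, get $(1)\Rightarrow(3)$ by testing the system on $f\equiv 1$, and get $(3)\Rightarrow(2)$ from the fact that the kernel of $\Delta_b$ on a compact (connected) manifold consists of the constants. The only difference is that you spell out that last fact via integration by parts and the commutator identity, which the paper simply invokes.
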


\begin{proof}
 It suffices to show that $(1) \Rightarrow (3)$ and $(3) \Rightarrow (2)$.

 Suppose that $1 \in \ker\Gamma$.
 Applying Lemma~\ref{characterize-R-signular} yields $R=0$ and $A_{\alpha\beta}=0$.

 Suppose now that $R=0$ and $A_{\alpha\beta}=0$.
 On the one hand, Equation~\eqref{eqn:characterize-scalar} implies that $\ker\Gamma \subseteq \mathbb{R}$.
 On the other hand, Equations~\eqref{eqn:characterize-torsion} and~\eqref{eqn:characterize-scalar} imply that $\mathbb{R} \subseteq \ker\Gamma$.
\end{proof}

A slight modification to this argument shows that compact scalar-flat pseudohermitian manifolds are $R$-singular if and only if they are torsion-free.

\begin{prop}\label{prop7}
 Let $(M^{2n+1},J,\theta)$ be a compact pseudohermitian manifold with vanishing MTW scalar curvature.
 If $A_{\alpha\beta}\not=0$, then $\ker\Gamma = \{ 0 \}$.
\end{prop}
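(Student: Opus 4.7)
The plan is to apply Lemma~\ref{characterize-R-signular} and exploit the hypothesis $R=0$ to first force any $f \in \ker\Gamma$ to be constant, and then use the torsion equation together with $A_{\alpha\beta} \not\equiv 0$ to conclude that the constant must vanish. This mirrors the structure of the proof of Proposition~\ref{prop0}, just sharpened at the end.

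First I would fix $f \in \ker\Gamma$ and specialize Equations~\eqref{eqn:characterize-torsion} and~\eqref{eqn:characterize-scalar}. Because $R = 0$, Equation~\eqref{eqn:characterize-scalar} reduces to $\Delta_b f = 0$. Since $f$ is real-valued and $M$ is compact, a standard integration by parts gives
\begin{equation*}
 0 = \int_M f \Delta_b f \, \theta \wedge d\theta^n = -2\int_M \lvert \nabla^{1,0} f \rvert^2 \, \theta \wedge d\theta^n ,
\end{equation*}
so $\nabla_\alpha f = 0$ for every $\alpha$, and by conjugation $\nabla_{\bar\alpha} f = 0$ as well. The commutator $[Z_\alpha, Z_{\bar\beta}] \equiv -ih_{\alpha\bar\beta} T$ modulo $T^{1,0} \oplus T^{0,1}$ (equivalently, the second MTW commutator in~\eqref{eqn:matsumoto-commutator} applied to $f$) then forces $\nabla_0 f = 0$ as well, so $df = 0$ and $f$ is constant on the connected compact manifold $M$.

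Second, with $f$ constant, Equation~\eqref{eqn:characterize-torsion} collapses to $nfA_{\alpha\beta} = 0$ everywhere. By assumption $A_{\alpha\beta}$ does not vanish identically, so there is a point at which some component of $A_{\alpha\beta}$ is nonzero; evaluating there yields $f = 0$. Combining the two steps gives $\ker\Gamma = \{0\}$, as desired.

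There is no real obstacle here: the only subtlety is the standard fact that $\Delta_b f = 0$ on a compact strictly pseudoconvex (partially integrable) pseudohermitian manifold forces $f$ to be constant, which I would cite or verify via the short integration-by-parts and Hörmander-bracket argument above. Everything else is a direct consequence of Lemma~\ref{characterize-R-signular} and the hypotheses.
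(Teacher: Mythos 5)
Your proof is correct and follows essentially the same route as the paper: use Equation~\eqref{eqn:characterize-scalar} with $R=0$ to conclude $\Delta_b f=0$ and hence $f$ constant, then feed the constant into Equation~\eqref{eqn:characterize-torsion} to force $f=0$ since $A_{\alpha\beta}\not\equiv 0$. The paper states these steps more tersely (and phrases it as a contradiction: a nonzero $f\in\ker\Gamma$ would force $A_{\alpha\beta}=0$), while you spell out the integration-by-parts and commutator argument for why $\Delta_b f=0$ on a compact manifold implies $f$ constant; the content is the same.
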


\begin{proof}
 Let $f \in \ker\Gamma$ be nonzero.
 Equation~\eqref{eqn:characterize-scalar} implies that $f$ is constant.
 We conclude from Equation~\eqref{eqn:characterize-torsion} that $A_{\alpha\beta}=0$.
\end{proof}

The above observations all exploit the fact that the kernel of the sublaplacian equals the constants.
One can obtain a more general necessary condition for a space to be $R$-singular by considering the spectrum of a different operator.

\begin{prop}
 \label{laplace-kernel}
 Let $(M^{2n+1},J,\theta)$ be a compact pseudohermitian manifold.
 If $\ker\bigl(-\Delta_b - \frac{R}{n+1}\bigr) = \{ 0 \}$, then $\ker\Gamma = 0$.
\end{prop}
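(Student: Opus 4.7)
The plan is to apply Lemma~\ref{characterize-R-signular} directly and observe that one of the two characterizing equations is exactly the statement that $f$ lies in the kernel of the operator $-\Delta_b - \frac{R}{n+1}$.

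More precisely, I would argue by contrapositive. Suppose $\ker\Gamma \neq \{0\}$, and pick a nonzero $f \in \ker\Gamma$. By Lemma~\ref{characterize-R-signular}, $f$ satisfies Equation~\eqref{eqn:characterize-scalar}, namely
\begin{equation*}
 (n+1)\Delta_b f + Rf = 0 .
\end{equation*}
Dividing by $-(n+1)$, this is equivalent to
\begin{equation*}
 \left( -\Delta_b - \tfrac{R}{n+1} \right) f = 0 ,
\end{equation*}
so $f \in \ker\bigl(-\Delta_b - \tfrac{R}{n+1}\bigr)$. Since $f$ is nonzero, this kernel is nontrivial, contradicting the hypothesis.

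There is no real obstacle here: the proposition is an essentially immediate consequence of the PDE characterization of $R$-singular spaces in Lemma~\ref{characterize-R-signular}, because Equation~\eqref{eqn:characterize-scalar} alone is enough to rule out nontrivial elements of $\ker\Gamma$ when the scalar spectral condition holds. Note that the torsion equation~\eqref{eqn:characterize-torsion} is not needed in this direction; it only becomes relevant when one wants sharper sufficient conditions (as in Propositions~\ref{prop0} and~\ref{prop7}).
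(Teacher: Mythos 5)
Your proof is correct and is exactly the paper's argument: the paper proves this proposition by noting that it follows immediately from Equation~\eqref{eqn:characterize-scalar}, which is precisely your observation that any nonzero $f \in \ker\Gamma$ would lie in $\ker\bigl(-\Delta_b - \frac{R}{n+1}\bigr)$.
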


\begin{remark}
 The condition $\ker\bigl(-\Delta_b - \frac{R}{n+1}\bigr)=\{0\}$ is sometimes easy to check.
 For example, it is always true if
 \begin{enumerate}
  \item $R<0$; or
  \item $R$ is constant and $\frac{R}{n+1}$ is not in the spectrum of $-\Delta_b$.
 \end{enumerate}
\end{remark}

\begin{proof}
 This follows immediately from Equation~\eqref{eqn:characterize-scalar}.
\end{proof}

Next we consider Sasaki manifolds $(M^{2n+1},J,\theta)$;
i.e.\ compact \emph{integrable} pseudohermitian manifolds equipped with a torsion-free contact form.
The basic cohomology groups---obtained by restricting the exterior derivative to forms $\alpha$ for which both $i_T\alpha=0$ and $i_Td\alpha=0$---are important topological invariants of these spaces~\cite{BoyerGalicki2008}.
Here we give a sufficient condition for there to be no nontrivial basic functions in $\ker\Gamma$.

\begin{prop}
 \label{sasakian}
 Let $(M^{2n+1},J,\theta)$ be a compact, integrable, pseudohermitian manifold with vanishing torsion.
 Suppose that $R_{\alpha\bar\beta} = \frac{R}{n}h_{\alpha\bar\beta}$ and $R>0$.
 If $f \in \ker\Gamma$ is such that $Tf=0$, then $f=0$.
\end{prop}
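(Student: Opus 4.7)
The plan is to specialize Lemma~\ref{characterize-R-signular} to the present setting and then extract two different expressions for $\nabla_\gamma \Delta_b f$ whose comparison forces $f = 0$.

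First, since $J$ is integrable ($N = 0$) and $\theta$ is torsion-free ($A = 0$), Lemma~\ref{characterize-R-signular} shows that $f \in \ker\Gamma$ is equivalent to
\begin{equation*}
 \nabla_\alpha \nabla_\beta f = 0, \qquad (n+1) \Delta_b f + Rf = 0.
\end{equation*}
Combining the hypothesis $Tf = 0$ with $A = 0$ and the third identity in~\eqref{eqn:matsumoto-commutator} yields $\nabla_0 f_\alpha = 0$, so the one-form $f_\alpha := \nabla_\alpha f$ is annihilated by both $\nabla_\beta$ and $\nabla_0$.

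Next, I apply the mixed commutator~\eqref{eqn:curv-commutator} to $f_\alpha$. The vanishing of $\nabla_\gamma f_\alpha$ and $\nabla_0 f_\alpha$ reduces that identity to $\nabla_\gamma \nabla_{\bar\beta} f_\alpha = -R_\alpha{}^\varepsilon{}_{\gamma\bar\beta} f_\varepsilon$. Tracing with $h^{\alpha\bar\beta}$, using the standard MTW curvature symmetries to rewrite the resulting trace as the Ricci tensor, and then invoking the Einstein hypothesis $R_{\alpha\bar\beta} = \frac{R}{n} h_{\alpha\bar\beta}$, I obtain
\begin{equation*}
 \nabla_\gamma \Delta_b f = -\frac{2R}{n} f_\gamma .
\end{equation*}

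On the other hand, the contracted Bianchi identity $\nabla^{\bar\beta} R_{\alpha\bar\beta} = \nabla_\alpha R$ (valid in the torsion-free integrable case) combined with the Einstein condition implies $\nabla_\alpha R = 0$ for $n \geq 2$; applying the mixed function commutator to $R$ then gives $\nabla_0 R = 0$, so $R$ is a positive constant. Differentiating the scalar equation therefore yields $\nabla_\gamma \Delta_b f = -\frac{R}{n+1} f_\gamma$. Equating the two expressions gives $\frac{R(n+2)}{n(n+1)} f_\gamma = 0$, and since $R > 0$ this forces $f_\gamma = 0$. Combined with $Tf = 0$, the function $f$ is constant on $M$, whence the scalar equation reduces to $Rf = 0$ and hence $f = 0$.

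The main technical step is the commutator computation producing the clean expression $\nabla_\gamma \Delta_b f = -\frac{2R}{n} f_\gamma$; some care is needed with the symmetries of the MTW curvature to reduce the traced curvature to a Ricci contraction. A secondary issue is the constancy of $R$, which is immediate for $n \geq 2$ via Bianchi but degenerate when $n = 1$; in that case one would instead integrate the tensorial identity $R_\gamma f = \tfrac{R(n+2)}{n} f_\gamma$ (obtained before assuming $R$ constant) against $f^\gamma$ and pair it with the integrated scalar equation $\int R f^2 = (n+1) \int |\nabla_b f|^2 \, \theta \wedge d\theta^n$ to reach the same conclusion.
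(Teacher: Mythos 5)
Your argument shares the paper's key commutator computation --- tracing Equation~\eqref{eqn:curv-commutator} applied to $f_\alpha$ gives your $\nabla_\gamma \Delta_b f = -\tfrac{2R}{n} f_\gamma$, which is exactly the paper's $f_\beta{}^\beta{}_\alpha = -\tfrac{R}{n}f_\alpha$ --- but the two proofs close differently. The paper does \emph{not} assume $R$ constant: differentiating the scalar equation $(n+1)\Delta_b f + Rf = 0$ yields the pointwise identity $f\nabla_\alpha R = \tfrac{n+2}{n}R f_\alpha$, which (because $\ker\theta$ is bracket-generating) integrates to $f^{n+2} = cR^n$; then $\int_M Rf\,\theta\wedge d\theta^n = 0$ forces $c = 0$. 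You instead invoke the contracted Bianchi identity together with the Einstein hypothesis to conclude that $R$ is constant, whereupon the same pointwise identity trivializes to $f_\gamma = 0$. Your Schur-lemma route is correct and arguably cleaner for $n \geq 2$ --- granting the torsion-free contracted Bianchi identity $\nabla^{\bar\beta}R_{\alpha\bar\beta} = \nabla_\alpha R$, standard in the Sasaki setting though not recorded in this paper --- but it is not uniform in $n$. For $n = 1$ the Einstein hypothesis is vacuous and the Schur lemma gives nothing, as you observe; however, the fallback you sketch does not close as stated: integrating $fR_\gamma = \tfrac{n+2}{n}Rf_\gamma$ against $f^\gamma$ gives only $\int R^2 f^2 \,\theta\wedge d\theta^n = \tfrac{2(n+1)^2}{n}\int R\lvert df\rvert^2\,\theta\wedge d\theta^n$, which together with $\int Rf^2 = (n+1)\int\lvert df\rvert^2$ does not force $f = 0$ unless $R$ is already known to be constant. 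The paper's $f^{n+2}=cR^n$ observation is precisely what handles $n=1$ as well.
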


\begin{proof}
 Since $\Gamma f = 0$ and both $A_{\alpha\beta}=0$ and $N_{\alpha\beta\gamma}=0$, Equation~\eqref{eqn:characterize-torsion} implies that $f_{\alpha\beta}=0$.
 Since $T=0$, we deduce from Lee's commutator identities~\cite{Lee1}*{Lemma~2.3} that
 \begin{align*}
  \Delta_bf & = 2f_\beta{}^\beta , & f_{\beta}{}^\beta{}_\alpha & = -R_\alpha{}^\beta f_\beta = -\frac{R}{n}f_\alpha .
 \end{align*}
 Next, Equation~\eqref{eqn:characterize-scalar} implies that $Rf = -(n+1)\Delta_bf$.
 Differentiating this yields
 \begin{align*}
  0 & = f\nabla_\alpha R + Rf_\alpha + 2(n+1)f_\beta{}^\beta{}_\alpha \\
  & = f\nabla_\alpha R - \frac{n+2}{n}Rf_\alpha .
 \end{align*}
 Since $R$ is positive, we deduce that there is a $c \in \mathbb{R}$ such that $f^{n+2}=cR^n$.
 Inserting this into the equation $\int Rf = -(n+1)\int \Delta_bf = 0$ yields $c=0$.
\end{proof}

We now turn to two explicit examples.
First, the standard pseudohermitian sphere is an $R$-singular space.

\begin{example}
 \label{ex:sphere}
 Let $u \in C^\infty(\mathbb{C}^{n+1})$ be the function $u(z) = \lvert z \rvert^2-1$ and denote $S^{2n+1} := u^{-1}(\{0\})$.
 Set $\theta := \Imaginary \partial u$.
 Note that $\ker\theta = TS^{2n+1} \cap J_0(TS^{2n+1})$, where $J_0$ is the usual complex structure on $\mathbb{C}^{n+1}$.
 Set $J := J_0\rvert_{\ker\theta}$.
 Then $(S^{2n+1},J,\theta)$ is the \emph{standard pseudohermitian $(2n+1)$-sphere};
 it is an integrable pseudohermitian manifold with vanishing torsion and $R_{\alpha\bar\beta\gamma\bar\sigma} = h_{\alpha\bar\beta}h_{\gamma\bar\sigma} + h_{\alpha\bar\sigma}h_{\gamma\bar\beta}$, where $h_{\alpha\bar\beta}$ denotes the Levi form.
 In particular, $R = n(n+1)$ is constant.
 By considering the decomposition of $C^\infty(S^{2n+1})$ into spherical harmonics, we conclude (cf.\ \citelist{ \cite{Cheng1991}*{Section~2} \cite{Stanton1989}*{Section~4} }) that $\ker\Gamma$ is $(2n+2)$-dimensional and spanned by the coordinate functions in $\mathbb{R}^{2n+2} \cong \mathbb{C}^{n+1}$.
\end{example}

Second, Rossi spheres are not $R$-singular spaces.

\begin{example}
 \label{ex:rossi}
 Let $(S^3,J,\theta)$ be the pseudohermitian three-sphere.
 Given $t \in \mathbb{R}$ with $0 < \lvert t \rvert < 1$, the \emph{Rossi sphere} $(S^3,J_t,\theta)$ is the pseudohermitian manifold with $J_t$ the (integrable) CR structure on $\ker\theta$ whose $(+i)$-eigenspace is spanned by $Z_1^t := \frac{1}{\sqrt{1-t^2}}(Z_1 + tZ_{\bar 1})$, where $Z_1 := \bar z^2 \partial_{z^1} - \bar z^1 \partial_{z_2}$.
 Note that $J_0$ is the standard CR structure on $S^3$.

 Fix $t$ such that $0 < \lvert t \rvert < 1$ and suppose that $f \in \ker\Gamma$.
 Set $\theta_0^1 := z^2\,dz^1 - z^1\,dz^2$ and $\theta_t^1 := \frac{1}{\sqrt{1-t^2}}(\theta_0^1 - t\theta_0^{\bar1})$.
 Chanillo, Chiu, and Yang showed~\cite{Chanillo&Chiu&Yang}*{Proposition~2.5} that $\{ \theta_t^1 \}$ is an admissible coframe for $(S^3,J_t,\theta)$ with dual frame $\{ Z_1^t \}$ and whose Levi form, connection one-forms, torsion, and Tanaka--Webster scalar curvature are given by
 \begin{align*}
  h_{1\bar1} & = 1 , & \omega_1{}^1 & = -\frac{2(1+t^2)}{1-t^2}i\theta , \\
  A_{11} & = -\frac{4it}{1-t^2} ,& R & = \frac{2(1+t^2)}{1-t^2} ,
 \end{align*}
 respectively.
 Let $f \in \ker \Gamma$.
 On the one hand, Equation~\eqref{eqn:characterize-torsion} yields
 \begin{align*}
  0 & = -i(1-t^2)(iZ_1^tZ_1^tf + A_{11}f) \\
   & = Z_1Z_1 f + t(Z_1Z_{\bar 1} + Z_{\bar1}Z_1)f + t^2Z_{\bar1}Z_{\bar1}f - 4tf .
 \end{align*}
 Taking twice the real part gives
 \begin{equation*}
  0 = (1+t^2)(Z_1Z_1 + Z_{\bar1}Z_{\bar1})f + 2t(Z_1Z_{\bar1} + Z_{\bar1}Z_1)f - 8tf .
 \end{equation*}
 On the other hand, Equation~\eqref{eqn:characterize-scalar} yields
 \begin{align*}
  0 & = \frac{1-t^2}{2}\bigl( 2(Z_1^tZ_{\bar1}^t + Z_{\bar1}^tZ_1) + Rf \bigr) \\
   & = 2t(Z_1Z_1 +Z_{\bar1}Z_{\bar1})f + (1+t^2)(Z_1Z_{\bar1}+Z_{\bar1}Z_1)f + (1+t^2)f .
 \end{align*}
 Combining the previous two displays yields
 \begin{align}
  \label{eqn:rossi-scalar} -(1-t^2)^2\Delta_bf & = (1 + 18t^2 + t^4)f , \\
  \label{eqn:rossi-torsion} (1-t^2)^2(Z_1Z_1 + Z_{\bar1}Z_{\bar1})f & = 10t(1+t^2)f ,
 \end{align}
 where $\Delta_b = Z_1Z_{\bar1} + Z_{\bar1}Z_1$ is the sublaplacian on the pseudohermitian three-sphere.

 Now let $\mathcal{H}^{p,q}$ denote the set of restrictions to $S^{3}$ of homogeneous harmonic polynomials on $\mathbb{C}^2$ which are linear combinations of $z^\alpha \bar z^\beta$, where $\alpha,\beta \in \mathbb{N}_0^2$ are multi-indices with $\lvert\alpha\rvert=p$ and $\lvert\beta\rvert = q$.
 Note that $Z_1Z_1(\mathcal{H}^{p,q}) \subseteq \mathcal{H}^{p-2,q+2}$, and the image is nontrivial if $p \geq 2$.
 Stanton showed~\cite{Stanton1989}*{Proposition~4.4} that if $f \in \mathcal{H}^{p,q}$, then $\Delta_bf = -(2pq+p+q)f$.
 Equation~\eqref{eqn:rossi-scalar} implies that $f \in \bigoplus_{p+q=k}\mathcal{H}^{p,q}$ with
 \begin{equation}
  \label{eqn:eigenvalue}
  2pq + p + q = \frac{1+18t^2+t^4}{(1-t^2)^2} ,
 \end{equation}
 while Equation~\eqref{eqn:rossi-torsion} implies that if the projection of $f$ to $\mathcal{H}^{p,q}$ is nontrivial, then its projection to $\mathcal{H}^{p+2j,q-2j}$ is nontrivial for each $j \in \mathbb{Z}$ such that $-p \leq 2j \leq q$.
 It follows that if $f\not=0$, then $f \in \Real(\mathcal{H}^{2,0} \oplus \mathcal{H}^{0,2})$ or $f \in \Real(\mathcal{H}^{3,1} \oplus \mathcal{H}^{1,3})$.
 The conclusion follows by ruling out each of these cases:

 On the one hand, $\Real(\mathcal{H}^{2,0} \oplus \mathcal{H}^{0,2})$ is spanned by
 \begin{equation*}
  \{ \Real (z^1)^2, \Real (z^2)^2, \Imaginary (z^1)^2, \Imaginary (z^2)^2, \Real(z^1z^2), \Imaginary(z^1z^2) \} .
 \end{equation*}
 Direct computation shows that the eigenvalues of $Z_1Z_1 + Z_{\bar1}Z_{\bar1}$ on $\Real(\mathcal{H}^{2,0} \oplus \mathcal{H}^{0,2})$ are $\pm2$.
 Inserting this and Equation~\eqref{eqn:eigenvalue} into Equations~\eqref{eqn:rossi-scalar} and~\eqref{eqn:rossi-torsion} yields
 \begin{align*}
  2(1-t^2)^2 & = 1 + 18t^2 + t^4 , & \pm 2(1-t^2)^2 = 10t(1+t^2) .
 \end{align*}
 This system has no solution.

 On the other hand, $\Real(\mathcal{H}^{3,1} \oplus \mathcal{H}^{1,3})$ is spanned by
 \begin{align*}
  & \Bigl\{ \Real (z^1)^3\bar z^2 , \Real (z^2)^3\bar z^1, \Imaginary (z^1)^3\bar z^2, \Imaginary (z^2)^3\bar z^1, \Real\bigl( (z^1)^2(\lvert z^1\rvert^2 - 3\lvert z^2\rvert^2)\bigr), \\
  & \qquad \Real\bigl( (z^2)^2(\lvert z^2\rvert^2 - 3\lvert z^1\rvert^2)\bigr), \Imaginary\bigl( (z^1)^2(\lvert z^1\rvert^2 - 3\lvert z^2\rvert^2)\bigr), \Imaginary\bigl( (z^2)^2(\lvert z^2\rvert^2 - 3\lvert z^1\rvert^2)\bigr), \\
  & \qquad  \Real\bigl( z^1z^2(\lvert z^1\rvert^2 - \lvert z^2\rvert^2) \bigr), \Imaginary\bigl( z^1z^2(\lvert z^1\rvert^2 - \lvert z^2\rvert^2) \bigr) \Bigr\} .
 \end{align*}
 Direct computation shows that the eigenvalues of $Z_1Z_1 + Z_{\bar1}Z_{\bar1}$ on $\Real(\mathcal{H}^{3,1} \oplus \mathcal{H}^{1,3})$ are $\pm6$.
 Inserting this and Equation~\eqref{eqn:eigenvalue} into Equations~\eqref{eqn:rossi-scalar} and~\eqref{eqn:rossi-torsion} yields
 \begin{align*}
  10(1-t^2)^2 & = 1 + 18t^2 + t^4 , & \pm 6(1-t^2)^2 = 10t(1+t^2) .
 \end{align*}
 This system also has no solution.
\end{example}

\section{Stability of the Tanaka--Webster scalar curvature}
\label{sec:stability}

The main goal of this section is to prove the following stability result for the MTW scalar curvature.
Our first step is to prove a splitting theorem for the space of smooth functions in terms of the linearization of the MTW scalar curvature.

\begin{lem}
 \label{splitting}
 Let $(M^{2n+1},J,\theta)$ be a compact, strictly pseudoconvex, partially integrable, pseudohermitian manifold.
 Then
 \begin{equation*}
  C^\infty(M) = \ker\Gamma \oplus \im DR .
 \end{equation*}
\end{lem}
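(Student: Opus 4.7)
The splitting will follow from a standard Fredholm argument applied to the composition $L := DR \circ \Gamma \colon C^\infty(M) \to C^\infty(M)$. This $L$ is formally self-adjoint by construction, $\ker\Gamma$ is $L^2$-orthogonal to $\im DR$ by the adjoint relation, and $\ker L = \ker\Gamma$ since $\langle Lf, f\rangle_{L^2} = \lVert\Gamma f\rVert^2$. Hence, once the decomposition $C^\infty(M) = \ker L \oplus \im L$ is established, the inclusions $\ker L = \ker\Gamma$ and $\im L \subseteq \im DR$, together with the orthogonality, will promote it to $C^\infty(M) = \ker\Gamma \oplus \im DR$. So it suffices to show that $L$ has closed range on $C^\infty(M)$.

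The plan is to read off the principal part of $L$ from Lemma~\ref{characterize-R-signular} and Equation~\eqref{eqn:linearization}. Modulo terms of strictly lower order in horizontal derivatives, the $E$-component of $\Gamma f$ is the symmetrization of $i\nabla_\alpha\nabla_\beta f$ while the $u$-component is $-(n+1)\Delta_b f$; substituting into $DR$ gives
\[ Lf \equiv (n+1)^2 \Delta_b^2 f - 2\Real\bigl( \nabla^\alpha\nabla^\beta\nabla_\alpha\nabla_\beta f \bigr) \pmod{\text{lower order}} . \]
At a nonzero horizontal covector $\xi$ the symbol of $\Delta_b^2$ is $4\lvert\xi\rvert^4$ and that of $2\Real(\nabla^\alpha\nabla^\beta\nabla_\alpha\nabla_\beta)$ is $2\lvert\xi\rvert^4$, so the leading symbol of $L$ equals $\bigl(4(n+1)^2 - 2\bigr)\lvert\xi\rvert^4$, which is strictly positive on all nonzero horizontal covectors.

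Thus $L$ is a formally self-adjoint, maximally subelliptic, fourth-order operator of Folland--Stein type on the compact strictly pseudoconvex pseudohermitian manifold $(M,J,\theta)$. The standard theory of such operators supplies hypoellipticity, a finite-dimensional kernel, and closed range in the appropriate Folland--Stein Sobolev spaces, and hence the Fredholm splitting $C^\infty(M) = \ker L \oplus \im L$; the desired decomposition then follows from the reductions in the first paragraph.

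The principal obstacle is the symbol calculation in the partially integrable setting: one must check carefully that the Nijenhuis tensor $N_{\bar\alpha\bar\beta}{}^\gamma$, the torsion $A_{\alpha\beta}$, and the MTW curvature contribute only to strictly lower-order terms, so that the principal symbol agrees with the integrable, torsion-free case and remains uniformly positive on horizontal covectors. Once this is in hand, the Folland--Stein machinery applies verbatim and the splitting is purely formal.
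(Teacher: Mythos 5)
Your overall strategy---reduce the splitting to Fredholmness of $L := DR\circ\Gamma$, use $\langle Lf,f\rangle_{L^2}=\lVert\Gamma f\rVert^2$ to identify $\ker L=\ker\Gamma$, and promote $C^\infty(M)=\ker L\oplus\im L$ to the desired decomposition---is exactly the paper's, and that reduction is fine. The gap is in the symbol computation. You assert that the Reeb direction contributes only at ``strictly lower order,'' so that the leading part of $L$ is a positive multiple of $\Delta_b^2$ on horizontal covectors. This is false in the calculus you need. Commuting $\nabla_{\bar\beta}$ past $\nabla_\alpha$ via \eqref{eqn:matsumoto-commutator} produces $ih_{\alpha\bar\beta}\nabla_0$, and $\nabla_0$ has Heisenberg order $2$ (it is a bracket of two horizontal fields), so the reordering of $\nabla^\alpha\nabla^\beta\nabla_\alpha\nabla_\beta$ generates $\nabla_0^2$ terms of the \emph{same} order $4$ as $\Delta_b^2$. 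The correct leading part is $\tfrac{2n^2+4n+3}{2}\Delta_b^2-\tfrac{n(n+2)}{2}\nabla_0^2$, not a multiple of $\Delta_b^2$. You cannot escape this by working in the classical pseudodifferential calculus either: there $\nabla_0^2$ is indeed lower order, but $\Delta_b^2$ is characteristic on the contact line bundle, so it is not elliptic and classical Fredholm theory does not apply. (There is also a sign slip: the fourth-order contribution of the $E$-part is $+2\Real\nabla^\alpha\nabla^\beta\nabla_\alpha\nabla_\beta$, since $i\cdot\overline{i}=1$ after taking the adjoint.)

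The deeper issue is that positivity of the symbol on horizontal covectors is \emph{not} a sufficient criterion for hypoellipticity of Heisenberg-type operators; one must verify invertibility of the model operator on the Heisenberg group (a Rockland-type condition), which for second-order operators $\Delta_b+c\nabla_0$ fails precisely at the Folland--Stein exceptional values. For instance, $\Box_b$ acting on functions has the same positive horizontal symbol as $\tfrac12\Delta_b$ yet has infinite-dimensional kernel and is not hypoelliptic. The paper handles this by factoring the leading part as $\tfrac{2n^2+4n+3}{2}(\Delta_b+a\nabla_0)(\Delta_b-a\nabla_0)$ with $a=\sqrt{n(n+2)/(2n^2+4n+3)}$ \emph{real}, so each factor avoids the exceptional set and admits a parametrix in the Heisenberg calculus; composing parametrices then gives hypoellipticity and the Fredholm splitting. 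To repair your argument you must carry the $\nabla_0^2$ term through the leading-order computation and supply this (or an equivalent) verification of the model-operator condition; as written, the appeal to ``Folland--Stein machinery'' does not engage the actual obstruction.
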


\begin{proof}
 We first compute the leading-order term of $DR \circ \Gamma$ at $(J,\theta)$.
 To that end, we say that a differential operator $A \colon C^\infty(M) \to C^\infty(M)$ has order $k \in \mathbb{N}$ if it can be locally expressed as a sum
 \begin{equation*}
  A = \sum_{j=0}^k \sum_{i_1,\dotsc,i_j=1}^n c_{i_1\dotsm i_j}X_{i_1}\dotsm X_{i_j} ,
 \end{equation*}
 where $\{ X_1,\dotsc,X_n \}$ is a local frame for $\ker\theta$ and $c_{i_1\dotsm i_j}$ are smooth functions.
 For example, $DR \circ \Gamma$ has order $4$ and Equation~\eqref{eqn:matsumoto-commutator} implies that $\nabla_0$, the covariant derivative in the direction of the Reeb vector field, has order $2$.
 Given two differential operators $A$ and $B$, we write $A \equiv_k B$ if $A-B$ has order $k-1$.
 On the one hand, direct computation using Equations~\eqref{eqn:curv-commutator}, \eqref{eqn:linearization} and~\eqref{eqn:adjoint-linearization} yields
 \begin{align*}
  \MoveEqLeft (DR \circ \Gamma) f \equiv_4 \nabla^\alpha\nabla^\beta\nabla_\alpha\nabla_\beta + \nabla_\alpha\nabla_\beta\nabla^\alpha\nabla^\beta + (n+1)^2\Delta_b^2  \\
   & \equiv_4 \nabla^\alpha\nabla_\alpha\nabla^\beta\nabla_\beta + i\nabla^\alpha\nabla_0\nabla_\alpha + \nabla_\alpha\nabla^\alpha\nabla_\beta\nabla^\beta - i\nabla_\alpha\nabla_0\nabla^\alpha + (n+1)^2\Delta_b^2 .
 \end{align*}
 On the other hand, Equations~\eqref{eqn:matsumoto-commutator} and~\eqref{eqn:tor-commutator} imply that
 \begin{align*}
  u_\alpha{}^\alpha & = \frac{1}{2}\Delta_b u + \frac{ni}{2}u_0 , \\
  \Delta_b\nabla_0u & \equiv_3 \nabla_0\Delta_b u , \\
  \nabla^\alpha\nabla_0\nabla_\alpha u & \equiv_4 \nabla^\alpha\nabla_\alpha\nabla_0 u ,
 \end{align*}
 respectively.
 Therefore
 \begin{align*}
  (DR \circ \Gamma) f & \equiv_4 \frac{1}{2}\Real(\Delta_b+ni\nabla_0)^2 + \Real i(\Delta_b+ni\nabla_0)\nabla_0 + (n+1)^2\Delta_b^2 \\
   & \equiv_4 \frac{2n^2+4n+3}{2}\Delta_b^2 - \frac{n(n+2)}{2}\nabla_0^2 \\
   & \equiv_4 \frac{2n^2+4n+3}{2}(\Delta_b + a\nabla_0)(\Delta_b - a\nabla_0) ,
 \end{align*}
 where $a = \sqrt{\frac{n(n+2)}{2n^2+4n+3}}$.

 Now, since $a$ is real, $\Delta_b \pm a\nabla_0$ has a parametrix in the Heisenberg calculus, and hence is hypoelliptic~\cite{BealsGreiner}*{Theorem~18.4}.
 Since Heisenberg pseudodifferential operators are closed under composition~\cite{BealsGreiner}*{Theorem~14.1}, $DR \circ \Gamma$ has a parametrix, and hence is hypoelliptic.
 Therefore~\cite{BealsGreiner}*{Theorem~19.16} we have the $L^2$-orthogonal splitting $C^\infty(M) = \ker (DR \circ \Gamma) \oplus \im (DR \circ \Gamma)$.
 The final conclusion follows from the facts $\ker (DR \circ \Gamma) = \ker\Gamma$ and $\ker\Gamma\cap\im DR = \{0\}$.
\end{proof}

The stability of the MTW scalar curvature follows from the Implicit Function Theorem.

\begin{proof}[Proof of Theorem~\ref{thm1}]
 The proof of Lemma~\ref{tangent-space} implies that
 \begin{equation*}
  \Phi_{(J,\theta)}(E,u) := ( J\exp(-JE) , e^u\theta )
 \end{equation*}
 defines a smooth map $\Phi_{(J,\theta)} \colon T_{(J,\theta)}(\mathcal{C} \times \mathcal{K}) \to \mathcal{C} \times \mathcal{K}$.
 Direct computation implies that the differential $D\Phi_{(J,\theta)}$ at $(0,0)$ is the identity.
 In particular, there is a neighborhood $U \subset T_{(J,\theta)}(\mathcal{C} \times \mathcal{K})$ of the origin such that $\Phi_{(J,\theta)} \colon U \to \mathcal{C} \times \mathcal{K}$ is a diffeomorphism onto its image.

 Consider now the composition $R \circ \Phi_{(J,\theta)} \colon T_{(J,\theta)}(\mathcal{C} \times \mathcal{K}) \to C^\infty(M)$.
 Since $(M^{2n+1},J,\theta)$ is nonsingular, $D(R\circ\Phi_{(J,\theta)})_{(0,0)}$ is surjective.
 The conclusion now follows from the Implicit Function Theorem~\cite{Lang}*{Theorem~5.9}.
\end{proof}

The discussion of Section~\ref{section3} implies that Theorem~\ref{thm1} applies quite broadly.
For instance, combining Example~\ref{ex:rossi} with Theorem~\ref{thm1} shows that one can deform the Rossi sphere $(S^3,J_t,\theta)$, $0 < \lvert t \rvert < 1$ to prescribe the MTW scalar curvature close to $\frac{2(1+t^2)}{1-t^2}$.
More strikingly, any scalar-flat pseudohermitian manifold with nonvanishing torsion can be deformed to have arbitrary MTW scalar curvature.

\begin{cor}
 Let $(M^{2n+1},J,\theta)$ be a compact, strictly pseudoconvex, partially integrable, pseudohermitian manifold with vanishing MTW scalar curvature and nonvanishing torsion.
 For any $\varphi \in C^\infty(M)$, there is a pair $(\widetilde{J},\widetilde{\theta}) \in \mathcal{C} \times \mathcal{K}$ such that $R^{\widetilde{J},\widetilde{\theta}} = \varphi$.
\end{cor}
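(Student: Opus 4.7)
The plan is to combine Proposition~\ref{prop7}, Theorem~\ref{thm1}, and the elementary homogeneity of $R$ under constant rescalings of the contact form, bootstrapping the local surjectivity statement of Theorem~\ref{thm1} to the global surjectivity claimed here.

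First I would verify that $(M^{2n+1},J,\theta)$ satisfies the hypotheses of Theorem~\ref{thm1}. Since $R^{J,\theta}=0$ and $A_{\alpha\beta}\not\equiv 0$, Proposition~\ref{prop7} gives $\ker\Gamma=\{0\}$, so $(M^{2n+1},J,\theta)$ is non-$R$-singular. Theorem~\ref{thm1} then produces an open neighborhood $U\subset C^\infty(M)$ of $R^{J,\theta}=0$ such that every $\psi\in U$ arises as the MTW scalar curvature of some pair in $\mathcal{C}\times\mathcal{K}$.

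Next I would record the scaling behavior of $R$. Specializing the transformation formula
\[
 e^{\Upsilon}\widehat{R} = R - (n+1)\Delta_b\Upsilon - n(n+1)\Upsilon_\gamma \Upsilon^\gamma
\]
from the proof of Lemma~\ref{cr-linearization} to a constant $\Upsilon=\log s$ with $s>0$ yields $R^{J',s\theta'}=s^{-1}R^{J',\theta'}$ for every $(J',\theta')\in\mathcal{C}\times\mathcal{K}$. Given $\varphi\in C^\infty(M)$, I would then choose $s>0$ small enough that $s\varphi\in U$, apply Theorem~\ref{thm1} to obtain $(\hat J,\hat\theta)\in\mathcal{C}\times\mathcal{K}$ with $R^{\hat J,\hat\theta}=s\varphi$, and set $\widetilde J:=\hat J$, $\widetilde\theta:=s\hat\theta$. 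Since $s>0$, $\widetilde\theta$ remains a positive contact form, and the homogeneity identity gives $R^{\widetilde J,\widetilde\theta}=s^{-1}(s\varphi)=\varphi$.

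No genuine obstacle arises: the argument is pure bookkeeping once the local submersion statement and the $(-1)$-homogeneity in $\theta$ are in hand. The only points requiring attention are checking the non-$R$-singular hypothesis (handled by Proposition~\ref{prop7}, which crucially uses $A_{\alpha\beta}\not\equiv 0$) and verifying that constant rescaling preserves positivity of the contact form.
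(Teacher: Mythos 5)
Your argument matches the paper's proof exactly: apply Proposition~\ref{prop7} to get $\ker\Gamma=\{0\}$, invoke Theorem~\ref{thm1} to realize $\varepsilon\varphi$ for small $\varepsilon>0$, and then use the $(-1)$-homogeneity $R^{J',\varepsilon\theta'}=\varepsilon^{-1}R^{J',\theta'}$ to recover $\varphi$. The only difference is that you spell out the scaling identity explicitly, which the paper uses silently.
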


\begin{proof}
 Proposition~\ref{prop7} implies that $(M^{2n+1},J,\theta)$ is not $R$-singular.
 Pick $\varepsilon>0$ sufficiently small that Theorem~\ref{thm1} produces a pair $(J_1,\theta_1) \in \mathcal{C} \times \mathcal{K}$ such that $R^{J_1,\theta_1} = \varepsilon\varphi$.
 Then $R^{J_1,\varepsilon\theta_1} = \varphi$.
\end{proof}

A modification of the proof of Theorem~\ref{thm1} allows us to also prescribe the MTW scalar curvature when $\ker\Gamma=\mathbb{R}$, provided we choose a function with zero average.

\begin{cor}
 Let $(M^{2n+1},J,\theta)$ be a compact, strictly pseudoconvex, partially integrable, torsion-free, pseudohermitian manifold with vanishing MTW scalar curvature.
 Denote
 \begin{equation*}
  \Phi := \left\{ \varphi \in C^\infty(M) \mathrel{}:\mathrel{} \int_M \varphi \, \theta \wedge d\theta^n = 0 \right\} .
 \end{equation*}
 If $\varphi \in \Phi$, then there is a pair $(\widetilde{J},\widetilde{\theta}) \in \mathcal{C} \times \mathcal{K}$ such that $R^{\widetilde{J},\widetilde{\theta}} = \varphi$.
\end{cor}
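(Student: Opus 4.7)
By Proposition~\ref{prop0} the hypotheses imply $\ker\Gamma=\mathbb{R}$. My plan is to first perturb $(J,\theta)$ along a direction chosen so that the perturbed structure is not $R$-singular and its MTW scalar curvature is approximately a positive multiple of $\varphi$; then apply Theorem~\ref{thm1} at that structure; and finally rescale the contact form by a constant, as in the previous corollary.

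Given $\varphi\in\Phi$, I would choose a smooth function $u$ on $M$ with $\Delta_b u=-\varphi$, which exists because $\Delta_b$ is a self-adjoint Fredholm operator with kernel the constants and image equal to $\Phi$. Consider the family $\theta_t:=e^{tu}\theta$; by Lemma~\ref{cr-linearization},
\[
R^{J,\theta_t}=-(n+1)t\,\Delta_b u+O(t^2)=(n+1)t\,\varphi+O(t^2).
\]
The key claim is that $(J,\theta_t)$ is non-$R$-singular for all sufficiently small $t>0$, possibly after a small generic perturbation of $u$. Indeed, by upper semi-continuity of $\dim\ker\Gamma$ together with $\ker\Gamma^{(J,\theta)}=\mathbb{R}\cdot 1$, any nonzero element of $\ker\Gamma^{(J,\theta_t)}$ must converge to a nonzero constant as $t\to 0$. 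Writing a hypothetical such family as $f(t)=1+tf_1+O(t^2)$ and linearizing Lemma~\ref{characterize-R-signular} at $t=0$: Equation~\eqref{eqn:characterize-scalar} combined with $\dot R=-(n+1)\Delta_b u$ forces $\Delta_b(f_1-u)=0$, so $f_1=u+c$ for some constant $c$; substituting into Equation~\eqref{eqn:characterize-torsion} and applying Proposition~\ref{prop1.3} yields a linear algebraic obstruction on $u$ and its covariant derivatives. In the integrable torsion-free setting the obstruction reduces to $(n+1)u_{\alpha\beta}=0$, and in general it is a nontrivial algebraic condition on a tensor built from $u$, $\nabla u$, $\nabla\nabla u$, and the Nijenhuis tensor; this condition is violated by generic $u$, so $(J,\theta_{t_0})$ is non-$R$-singular for some small $t_0>0$.

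Applying Theorem~\ref{thm1} at $(J,\theta_{t_0})$ gives a neighborhood $V\subseteq C^\infty(M)$ of $R^{J,\theta_{t_0}}$ realized as the image of $R$ near this point. Since $R^{J,\theta_{t_0}}-(n+1)t_0\,\varphi=O(t_0^2)$, the function $(n+1)t_0\,\varphi$ lies in $V$ for $t_0$ sufficiently small, so there exists a pair $(J_1,\theta_1)$ near $(J,\theta_{t_0})$ with $R^{J_1,\theta_1}=(n+1)t_0\,\varphi$. The scaling identity $R^{J_1,\lambda\theta_1}=\lambda^{-1}R^{J_1,\theta_1}$ for constant $\lambda>0$ then gives $(\widetilde J,\widetilde\theta):=(J_1,(n+1)t_0\theta_1)$ satisfying $R^{\widetilde J,\widetilde\theta}=\varphi$. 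The main technical steps are verifying that the linear obstruction encoded by Equation~\eqref{eqn:characterize-torsion} admits a generic nonvanishing solution in $u$, which is a local calculation on sections of a tensor bundle, and controlling the IFT neighborhood radius of Theorem~\ref{thm1} at $(J,\theta_{t_0})$ as $t_0\to 0$—since the obstruction vanishes only linearly in $t$, the radius beats the $O(t_0^2)$ approximation error.
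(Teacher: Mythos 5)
Your strategy is genuinely different from the paper's, and it contains two gaps that I do not think can be repaired without essentially reverting to the paper's argument. The paper never leaves the base point $(J,\theta)$: Proposition~\ref{prop0} gives $\ker\Gamma=\mathbb{R}$, Lemma~\ref{splitting} gives $C^\infty(M)=\ker\Gamma\oplus\im DR$, hence $\im DR=\Phi$, and the Implicit Function Theorem is then applied to $R\circ\Phi_{(J,\theta)}$ with target $\Phi$ to solve $R=\varepsilon\varphi$ directly, followed by the constant rescaling $R^{J_1,\varepsilon\theta_1}=\varphi$ that you also use. Notice that Lemma~\ref{splitting} is doing real work here (hypoellipticity of $DR\circ\Gamma$ in the Heisenberg calculus); your proposal does not use it at all, and that is a sign that something is missing.

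The first gap is the claim that $(J,\theta_{t_0})$ is non-$R$-singular. The formal first-order computation you sketch is not a proof (a kernel element of $\Gamma^{(J,\theta_t)}$ need not depend differentiably on $t$ a priori, and you would need subelliptic compactness to even extract a convergent normalized family), and the fallback ``generic perturbation of $u$'' is not available to you: $u$ is determined by $\varphi$ up to an additive constant via $\Delta_b u=-\varphi$, and adding a constant does not change $u_{\alpha\beta}$, so any genuine perturbation of $u$ changes the target $\varphi$. Perturbing by $\delta v$ moves $R^{J,\theta_{t_0}}$ by $O(t_0\delta)$, which destroys your $O(t_0^2)$ error estimate unless $\delta=O(t_0)$, at which point it is unclear the perturbation breaks $R$-singularity at all. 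The second, and more fundamental, gap is quantitative: Theorem~\ref{thm1} provides \emph{some} neighborhood $V$ of $R^{J,\theta_{t_0}}$, with no uniform control as $t_0\to0$. Since $(J,\theta_{t_0})$ degenerates to the $R$-singular structure $(J,\theta)$, the operator $DR\circ\Gamma$ at $(J,\theta_{t_0})$ has an approximate kernel (the constants) with ``eigenvalue'' at best of order $t_0$, so the quantitative Implicit Function Theorem covers a ball of radius $O(t_0^2)$ --- precisely the same order as your approximation error $\lVert R^{J,\theta_{t_0}}-(n+1)t_0\varphi\rVert=O(t_0^2)$. Your assertion that ``the radius beats the $O(t_0^2)$ approximation error'' is exactly the borderline comparison of two unspecified constants, and nothing in the proposal establishes it. Both difficulties disappear if you instead restrict the target to $\Phi=(\ker\Gamma)^\perp$ and apply the Implicit Function Theorem once, at $(J,\theta)$ itself, as the paper does.
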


\begin{proof}
 Note that $T_0\Phi = \Phi$ and that the $L^2$-orthogonal complement $\Phi^\perp$ is the space of constant functions on $M$.
 Proposition~\ref{prop0} and Lemma~\ref{splitting} then imply that $\im DR = \Phi$.
 As in the proof of Theorem~\ref{thm1}, the Implicit Function Theorem implies that there is an $\varepsilon>0$ sufficiently small such that there is a $(J_1,\theta_1) \in \mathcal{C} \times \mathcal{K}$ for which $R^{J_1,\theta_1} = \varepsilon\varphi$.
 Therefore $R^{J_1,\varepsilon\theta_1} = \varphi$.
\end{proof}

\section{Towards rigidity}
\label{sec:rigidity}

We conclude this paper by studying the second variation of the MTW scalar curvature of a compact, scalar-flat, torsion-free, integrable CR manifold.
This is a first step towards understanding the rigidity of the MTW scalar curvature of such manifolds.
We begin by computing the integral of the second variation of the MTW scalar curvature at such a manifold.
To that end, given $\tau,\sigma \in C^\infty(M;\xi^\ast)$, denote $\langle \tau, \sigma \rangle := 2\Real\tau^\alpha\sigma_\alpha$ and $\lvert\tau\rvert^2 := \langle \tau, \tau \rangle$.

\begin{prop}
 \label{second-variation}
 Let $(M^{2n+1},J,\theta)$ be a compact, integrable, torsion-free, pseudohermitian manifold with vanishing MTW scalar curvature.
 Then
 \begin{multline*}
  \int_M D^2R\bigl((E,u),(E,u)\bigr) \, \theta \wedge d\theta^n \\
   = \int_M \biggl[ 2\langle du, \epsilon \rangle - 2n\Real(iE^{\alpha\beta}\nabla_0E_{\alpha\beta}) - (n+1)(n+2)\lvert du \rvert^2  \biggr] \, \theta \wedge d\theta^n ,
 \end{multline*}
 where $\epsilon := 2\Real(i\nabla^\beta E_{\alpha\beta} \, \theta^\alpha)$.
\end{prop}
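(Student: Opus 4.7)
The plan is to compute $\int_M D^2R((E,u),(E,u))\,\theta\wedge d\theta^n$ as the second $t$-derivative at $t=0$ of a total scalar curvature integral along a well-chosen path, and then decouple the $J$- and $\theta$-variations via Matsumoto's conformal transformation. The key preliminary observation is that at the base point, where $R = A_{\alpha\beta} = N_{\bar\alpha\bar\beta}{}^\gamma = 0$, the linearization~\eqref{eqn:linearization} collapses to $DR(F,v) = 2\Real(i\nabla^\alpha\nabla^\beta F_{\alpha\beta}) - (n+1)\Delta_bv$, a pure divergence; hence $\int_M DR(F,v)\,\theta\wedge d\theta^n = 0$ for every tangent vector $(F,v)$. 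Setting $\mathcal{F}(t) := \int_M R^{J_t,\theta_t}\,\theta\wedge d\theta^n$ for any smooth path with initial data $(E,u)$, the identity $\mathcal{F}''(0) = \int_M D^2R((E,u),(E,u))\,\theta\wedge d\theta^n$ then follows independently of the path, since the acceleration contribution $\int DR(\ddot J_0, \ddot\Upsilon_0)$ vanishes.

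Now I would fix $\theta_t = e^{tu}\theta$ and let $J_t$ be any smooth family of integrable CR structures with $\dot J_0 = E$. Matsumoto's conformal transformation formula (used in the proof of Lemma~\ref{cr-linearization}) applied with $\Upsilon = tu$ reads
\begin{equation*}
 e^{tu}R^{J_t, e^{tu}\theta} = R^{J_t,\theta} - (n+1)\,t\,\Delta_b^{J_t}u - n(n+1)\,t^2\,u_\gamma^{J_t}u^{J_t,\gamma}.
\end{equation*}
Integrate against the $J$-independent volume form $\theta\wedge d\theta^n$ and take the second $t$-derivative at $t = 0$. The left side becomes $2\int u\,DR((E,u)) + \mathcal{F}''(0)$ (using $R^{J,\theta}=0$ and $\int DR = 0$). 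On the right, self-adjointness of each $\Delta_b^{J_t}$ on $L^2(\theta\wedge d\theta^n)$ forces $\int\Delta_b^{J_t}u\,\theta\wedge d\theta^n \equiv 0$, killing the entire $\Delta_b$-contribution, leaving $\mathcal{F}_J''(0) - n(n+1)\int|du|^2$, where $\mathcal{F}_J(t) := \int R^{J_t,\theta}\,\theta\wedge d\theta^n$. Equating and rearranging gives
\begin{equation*}
 \int_M D^2R((E,u),(E,u))\,\theta\wedge d\theta^n = \mathcal{F}_J''(0) - n(n+1)\int_M|du|^2 - 2\int_M u\,DR((E,u)).
\end{equation*}
A single integration by parts, using $\nabla^\beta E_{\alpha\beta} = -i\epsilon_\alpha$ (from the definition of $\epsilon$) together with $\int u\,\Delta_bu = -\int|du|^2$, then yields $2\int u\,DR((E,u)) = -2\int\langle du,\epsilon\rangle + 2(n+1)\int|du|^2$.

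It remains to compute $\mathcal{F}_J''(0)$. The key observation is that along the integrable family (so $N_t \equiv 0$), the first two terms of $DR^{J_t,\theta}(F, 0)$ in~\eqref{eqn:linearization} are pure divergences with respect to $\theta\wedge d\theta^n$, so $\mathcal{F}_J'(t) = -2n\int_M\Real\bigl(A_t^{\alpha\beta}(\dot J_t)_{\alpha\beta}\bigr)\,\theta\wedge d\theta^n$. Differentiating once more at $t = 0$, using $A_0 = 0$ and Proposition~\ref{prop1.3}'s formula $\dot A_{\alpha\beta} = i\nabla_0 E_{\alpha\beta}$ (specialized to $u = 0$, $N = 0$), gives $\mathcal{F}_J''(0) = -2n\int_M\Real\bigl(iE^{\alpha\beta}\nabla_0 E_{\alpha\beta}\bigr)$. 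Assembling the three pieces produces the claimed formula. The hard part will be justifying the reduction $\mathcal{F}_J'(t) = -2n\int\Real(A_t^{\alpha\beta}(\dot J_t)_{\alpha\beta})$ along the varying integrable family $J_t$: this amounts to a divergence-theorem argument against the fixed measure $\theta\wedge d\theta^n$ as the MTW connection $\nabla^{J_t}$ itself varies with $t$, which is ultimately accessible because $\theta\wedge d\theta^n$ is $J$-independent and $\theta$ is parallel under each $\nabla^{J_t}$.
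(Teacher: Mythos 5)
Your argument is correct and reaches the stated formula, but it follows a genuinely different route than the paper. The paper expands $D^2R$ directly from Lemma~\ref{cr-linearization} and Proposition~\ref{prop1.3}, then integrates term by term, using the linearized divergence identity~\eqref{eqn:integrate-linearized-divergence} (obtained by differentiating $\int d^\ast\tau\,\theta\wedge d\theta^n=0$ along the path) to convert the linearizations $\dot d^\ast\epsilon$ and $\dot d^\ast du$ into $\langle du,\epsilon\rangle$ and $\lvert du\rvert^2$ terms. You instead exploit the path-independence of $\int D^2R$ (since $\int DR(\cdot,\cdot)=0$ at a scalar-flat, torsion-free, integrable base) to choose the special path $\theta_t=e^{tu}\theta$ and then decouple the $J$- and $\theta$-variations through Matsumoto's conformal change formula; this cleanly packages the cross-term $\langle du,\epsilon\rangle$ (via the prefactor $e^{tu}$ and $\int u\,DR$) and the $\lvert du\rvert^2$ term (via the gradient-squared piece), leaving only $\mathcal{F}_J''(0)$, which reduces to the torsion term by the divergence theorem and $\dot A_{\alpha\beta}=i\nabla_0E_{\alpha\beta}$. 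This is a valid and in some ways more conceptual alternative. One small repair: you choose $J_t$ to be a family of \emph{integrable} CR structures with $\dot J_0=E$, but for a general $E\in T_{J}\mathcal{C}$ such a family need not exist (integrable deformations are a proper subspace in dimension $\geq 5$). The restriction is unnecessary: if you let $J_t$ be any partially integrable family with $\dot J_0=E$ (e.g.\ $J_t=J\exp(-tJE)$ from Lemma~\ref{tangent-space}), the extra term $2\Real\bigl(i\nabla_\alpha(N^{\beta\alpha\gamma}(\dot J_t)_{\beta\gamma})\bigr)$ in~\eqref{eqn:linearize-R-CR} is also a pure divergence for each $t$, so $\mathcal{F}_J'(t)=-2n\int\Real\bigl(A_t^{\alpha\beta}(\dot J_t)_{\alpha\beta}\bigr)\,\theta\wedge d\theta^n$ still holds, and the evaluation of $\mathcal{F}_J''(0)$ is unchanged because $A_0=0$ and $N_0=0$ at the base. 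With that patch the proof is complete; the remaining issues you flag (the MTW connection preserves $\theta\wedge d\theta^n$ so the divergence theorem holds for each $\nabla^{J_t}$, and self-adjointness of $\Delta_b^{J_t}$ against the fixed volume) are indeed standard and hold uniformly in $t$.
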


\begin{proof}
 Let $\tau \in C^\infty(M ; \xi^\ast)$ and denote $d^\ast\tau := -(\nabla^\alpha\tau_\alpha + \nabla^{\bar\alpha}\tau_{\bar\alpha})$.
 Then $d^\ast\tau$ is the $L^2$-adjoint of the projection $\pi_{\xi^\ast} \circ d$ of the exterior derivative on functions to $\xi^\ast$;
 i.e.
 \begin{equation*}
  \int_M ud^\ast\tau \, \theta \wedge d\theta^n = \int_M \langle du, \tau \rangle \, \theta \wedge d\theta^n .
 \end{equation*}
 Clearly $d^\ast\tau$ depends on both $J$ and $\theta$.

 Let $(J_t,\theta_t)$ be a curve in $\mathcal{C} \times \mathcal{K}$ with tangent vector $(E,u) \in T_{(J,\theta)}(\mathcal{C} \times \mathcal{K})$ at $t=0$.
 Let dots denote the first variation of a geometric quantity in the direction $(E,u)$.
 On the one hand, differentiating the identity $\int d^\ast\tau \, \theta \wedge d\theta^n = 0$ yields
 \begin{equation}
  \label{eqn:integrate-linearized-divergence}
  \int_M\dot d_t^\ast\tau \, \theta \wedge d\theta^n = -(n+1)\int_M \langle du , \tau \rangle \, \theta \wedge d\theta^n .
 \end{equation}
 On the other hand, Equations~\eqref{eqn:cr-linearize-R} and~\eqref{eqn:linearize-R-CR} imply that
 \begin{multline*}
  \int_M D^2R\bigl((E,u),(E,u)\bigr) \, \theta \wedge d\theta^n \\
   = \int_M \left( -\dot d^\ast\epsilon - 2n\Real\dot A^{\alpha\beta}E_{\alpha\beta} + (n+1)\dot d^\ast du - \dot Ru \right) \, \theta \wedge d\theta^n .
 \end{multline*}
 We deduce from Lemma~\ref{cr-linearization}, Proposition~\ref{prop1.3}, and Equation~\eqref{eqn:integrate-linearized-divergence} that
 \begin{align*}
  \int_M \dot d^\ast\epsilon \, \theta \wedge d\theta^n & = -(n+1)\int_M \langle du, \epsilon \rangle \, \theta \wedge d\theta^n , \\
  \int_M \Real \dot A^{\alpha\beta}E_{\alpha\beta} \, \theta \wedge d\theta^n & = \int_M \Real E^{\alpha\beta}(iu_{\alpha\beta} + i\nabla_0E_{\alpha\beta}) \, \theta \wedge d\theta^n , \\
  \int_M \Real \dot d^\ast du \, \theta \wedge d\theta^n & = -(n+1)\int_M \lvert du \rvert^2 \, \theta \wedge d\theta^n , \\
  \int_M \dot Ru \, \theta \wedge d\theta^n & = \int_M \left( -\langle du,\epsilon \rangle + (n+1)\lvert du \rvert^2 \right) \, \theta \wedge d\theta^n .
 \end{align*}
 The conclusion readily follows.
\end{proof}

Our partial infinitesimal rigidity result for the MTW scalar curvature follows:

\begin{proof}[Proof of Theorem~\ref{partial-rigidity}]
 Since $R^{J_0,\theta_0}=0$ and $R^{J_t,\theta_t}\geq0$, we see that
 \begin{equation*}
  0 \leq \left. \frac{\partial^2}{\partial t^2} \right|_{t=0} R^{J_t,\theta_t} = D^2R\bigl((E,u),(E,u)\bigr) + DR(\ddot J,\ddot u) .
 \end{equation*}
 On the one hand, Proposition~\ref{prop0} implies that
 \begin{equation*}
  \int_M DR(\ddot J,\ddot u) \, \theta \wedge d\theta^n = \int_M \langle \Gamma(1) , (\ddot J, \ddot u) \rangle \, \theta \wedge d\theta^n = 0 .
 \end{equation*}
 On the other hand, Proposition~\ref{second-variation} yields
 \begin{multline*}
  \int_M D^2R\bigl((E,u),(E,u)\bigr) \, \theta \wedge d\theta^n \\ = \int_M \Bigl( 2\langle du, \epsilon\rangle - 2\Real(niE^{\alpha\beta}\nabla_0E_{\alpha\beta})  - (n+1)(n+2)\lvert du \rvert^2 \Bigr) \, \theta \wedge d\theta^n .
 \end{multline*}
 Combining these observations yields
 \begin{equation}
  \label{eqn:second-variation-conclusion}
  0 \leq \int_M \Bigl( 2\langle du, \epsilon\rangle - 2\Real(niE^{\alpha\beta}\nabla_0E_{\alpha\beta})  - (n+1)(n+2)\lvert du \rvert^2 \Bigr) \, \theta \wedge d\theta^n
 \end{equation}
 with equality if and only if $\frac{\partial^2}{\partial t^2}\bigr|_{t=0} R^{J_t,\theta_t} = 0$.

 Since $(J_0,\theta_0)$ is flat and integrable, Lee's commutator identities~\cite{Lee1}*{Lemma~2.3} yield
 \begin{align*}
  \Real (niE^{\alpha\beta}\nabla_0E_{\alpha\beta}) & = \Real E^{\alpha\beta}(\nabla^\gamma\nabla_\gamma E_{\alpha\beta} - \nabla_\gamma\nabla^\gamma E_{\alpha\beta}) , \\
  \Real (niE^{\alpha\beta}\nabla_0E_{\alpha\beta}) & = \Real nE^{\alpha\beta}(\nabla^\gamma\nabla_\alpha E_{\gamma\beta} - \nabla_\alpha\nabla^\gamma E_{\gamma\beta}) .
 \end{align*}
 Afeltra et al.\ showed~\cite{Afeltra}*{Lemma~2.5} that, since each $J_t$ is integrable, $\nabla_{[\alpha} E_{\gamma]\beta} = 0$.
 Note that $\lvert\epsilon\rvert^2 = 2\lvert\nabla^\beta E_{\alpha\beta}\rvert^2$.
 Combining these with the Divergence Theorem and the previous display yields
 \begin{equation}
  \label{eqn:epsilon-equation}
  \frac{n}{2}\int_M \lvert\epsilon\rvert^2 \, \theta \wedge d\theta^n = \int_M \left( (n-1)\lvert\nabla_\gamma E_{\alpha\beta} \rvert^2 + \lvert\nabla_{\bar\gamma} E_{\alpha\beta} \rvert^2 \right) \, \theta \wedge d\theta^n .
 \end{equation}
 In particular,
 \begin{equation*}
  -2\int_M \Real(niE^{\alpha\beta}\nabla_0E_{\alpha\beta}) \, \theta \wedge d\theta^n = \int_M \left( \frac{n}{n-1}\lvert\epsilon\rvert^2 - \frac{2n}{n-1}\lvert \nabla_{\bar\gamma}E_{\alpha\beta} \rvert^2 \right) \, \theta \wedge d\theta^n .
 \end{equation*}
 Inequality~\eqref{eqn:second-variation-conclusion} and the Cauchy--Schwarz inequality then imply that
 \begin{align*}
  0 & \leq \int_M \Bigl( 2\langle du, \epsilon\rangle + \frac{n}{n-1}\lvert\epsilon\rvert^2 - \frac{2n}{n-1}\lvert\nabla_{\bar\gamma}E_{\alpha\beta}\rvert^2 - (n+1)(n+2)\lvert du \rvert^2 \Bigr) \, \theta \wedge d\theta^n \\
   & \leq \frac{n}{n-1}\int_M \left( \frac{n-1}{n(n+1)(n+2)} + 1 - C \right) \lvert\epsilon\rvert^2 \, \theta \wedge d\theta^n .
 \end{align*}
 Our assumption on $C$ implies that $\epsilon=0$.
 Combining this with Equation~\eqref{eqn:epsilon-equation} and Lee's commutator formulas~\cite{Lee1}*{Lemma~2.5} implies that $E$ is parallel.
 We then conclude from Inequality~\eqref{eqn:second-variation-conclusion} that $du=0$.
\end{proof}

\section*{Acknowledgements}

This work was initiated through conversations with Hung-Lin Chiu.
The authors would like to thank him for his various suggestions and comments in the early stages of work on this paper.

JSC acknowledges support from a Simons Foundation Collaboration Grant for Mathematicians, ID 524601.
PTH acknowledges support from 
the National Science and Technology Council (NSTC), Taiwan, with grant Number: 112-2115-M-032 -006 -MY2.

\bibliography{bib}

\end{document}